\numberwithin{equation}{section}
\newenvironment{proof2.1}{\medskip\noindent{\bf Proof of the Theorem 2.1:}\enspace}{\hfill \qed \newline \medskip}
\newenvironment{proof2.2}{\medskip\noindent{\bf Proof of the Theorem 2.2:}\enspace}{\hfill \qed \newline \medskip}
\newtheorem{theorem}{\color{black}\indent Theorem}[section]
\newtheorem{lemma}{\color{black}\indent Lemma}[section]
\newtheorem{definition}{\color{black}\indent Definition}[section]
\newtheorem{remark}{\color{black}\indent Remark}[section]
\begin{document}
\title{Existence of nontrivial solutions to a critical Kirchhoff equation with a logarithmic
type perturbation in dimension four}
\author{Qian Zhang\qquad Yuzhu Han$^{\dag}$}

\affil{School of Mathematics, Jilin University,
 Changchun 130012, P.R. China}
\renewcommand*{\Affilfont}{\small\it}
\date{} \maketitle
\vspace{-20pt}

\footnotetext{\hspace{-1.9mm}$^\dag$Corresponding author.\\
Email addresses: yzhan@jlu.edu.cn(Y. Z. Han).

\thanks{
$^*$Supported by the National Key Research and Development Program of China
(grant no.2020YFA0714101).}}

{\bf Abstract}
In this paper, a critical Kirchhoff equation with a logarithmic type subcritical term is considered in a bounded domain in $\mathbb{R}^4$. We view this problem as a critical elliptic equation with a nonlocal perturbation, and investigate how the nonlocal term affects the existence of weak solutions to the problem. By means of Ekeland's variational principle, Br\'{e}zis-Lieb's lemma and some convergence tricks for nonlocal problems, we show that this problem admits a local minimum solution and a least energy solution under some appropriate assumptions on the parameters. Moreover, under some further assumptions, the local minimum solution is also a least energy solution. Compared with the ones obtained in \cite{DengHe2023} and \cite{Hajaiej2024}, our results show that the introduction of the nonlocal term enlarges the ranges of the parameters such that the problem admits weak solutions, which implies that the nonlocal term has a positive effect on the existence of weak solutions.

{\bf Keywords}  Kirchhoff equation;  Critical; Logarithmic type perturbation;
Ekeland's variational principle; Br\'{e}zis-Lieb's lemma.

{\bf AMS Mathematics Subject Classification 2020:} Primary 35D30; Secondary 35J60.

\section{Introduction}

In this paper, we consider the following Kirchhoff type elliptic problem with a logarithmic subcritical term
\begin{equation}\label{eq1}
\begin{cases}
-(1+b\int_{\Omega}|\nabla{u}|^2\mathrm{d}x) \Delta{u}=\lambda u+\mu u\ln u^{2}+|u|^{2}u,&x\in\Omega,\\
u(x)=0,&x\in\partial\Omega,
\end{cases}
\end{equation}
where $\Omega\subset \mathbb{R}^4$ is a bounded domain with smooth boundary $\partial\Omega$,
$b>0$, $\mu<0$ and $\lambda\in\mathbb{R}$.

The equation in \eqref{eq1} is closely related to the stationary counterpart of the following
wave equation
\begin{align}\label{eq2}
\rho\frac{\partial^2u}{\partial{t^2}}-\left(\frac{P_0}{h}+\frac{E}{2L}\int_{0}^{L}\left|\frac{\partial u}{\partial x}\right|^2\mathrm{d}x\right)\frac{\partial^2u}{\partial x^2}=f(x,u),
\end{align}
where $\rho$, $P_0$, $h$, $E$, $L$ represent some physical quantities. The nonlocal term appears
in \eqref{eq2} as a consequence of taking into account the effects of changes in string length
during the vibrations. Since equation \eqref{eq2} was first proposed by Kirchhoff in \cite{Kirchhoff},
such equations are usually referred to as Kirchhoff equations afterwards.

In the past few years, much effort has been devoted to the study of Kirchhoff type elliptic problems
without the logarithmic subcritical term, and many interesting results have been obtained on the
existence (possibly multiplicity) of weak solutions to such problems. In particular, for the following
second order Kirchhoff type elliptic problem
\begin{equation}\label{eq3}
\begin{cases}
-(a+b\int_{\Omega}|\nabla{u}|^2\mathrm{d}x) \Delta{u}=f(x,u),&x\in\Omega,\\
u(x)=0,&x\in\partial\Omega,
\end{cases}
\end{equation}
Naimen \cite{ND} investigated it with $f(x,u)=\lambda g(x,u)+u^5$ and $\Omega$ being a bounded smooth
domain in $\mathbb{R}^3$. By means of Mountain Pass Lemma and truncation argument,
he obtained the existence and nonexistence of solutions to problem \eqref{eq3} when $g(x,u)$ satisfies
some structural conditions and $\lambda\in \mathbb{R}$.  When $g(x,u)=u$, Zhong et al.
\cite{Zhong-Tang} proved that problem \eqref{eq3} admits at least two solutions for $\lambda$ in a small right
neighborhood of $\lambda_1(\Omega)$ with the help of Nehari manifold. Here $\lambda_1(\Omega)>0$ is the first eigenvalue
of $(-\Delta, H_0^1(\Omega))$. When the dimension of the space $N=4$, problem \eqref{eq1} with $\mu=0$ was
considered by Naimen in \cite{Naimen2}. Among many other interesting results, he showed that when
$\lambda\in(0,\lambda_1(\Omega))$, problem \eqref{eq1} admits a solution if and only if $bS^2<1$, where $S>0$
is the best embedding constant from $H_0^1(\Omega)$ to $L^4(\Omega)$. Later, Naimen \cite{Naimen3}
reconsidered this problem in an open ball in $\mathbb{R}^3$ and revealed the combined effect of $\lambda$ and $b$ on the existence
and nonexistence of solutions to problem \eqref{eq1}. For more results on the existence and (or) non-existence
of solutions to critical or subcritical Kirchhoff equations, interested readers may refer to \cite{ChenKuoWu,Faraci,Figueiredo2013,LiHan2023,Perera,Shuaiwei,TangCheng} and their references.

We remark that when dealing with problem \eqref{eq3} in the framework of variational methods,
the cases when $N\leq 3$ and $N\geq 4$ turn out to be quite different. Indeed, when one looks for weak solutions
to \eqref{eq3} by using variational methods, $f$ is usually required to satisfy the so-called
Ambrosetti-Rabinowitz condition: i.e., for some $\theta >4$ and $R>0$,  there holds
\begin{equation}\label{A-R}
0<\theta F(x,t)\leq tf(x,t), \ \forall~|t|>R,~x\in \Omega,
\end{equation}
which means that $f$ is $4$-superlinear in $t$ at infinity, that is,
  \begin{equation}\label{4-super}
\lim\limits_{t\rightarrow+\infty}\frac{F(x,t)}
{t^4}=+\infty,
\end{equation}
where $F(x,t)=\int_{0}^{t}f(x,\tau)\mathrm{d}\tau$.
Condition \eqref{A-R} is a key point when we prove the boundedness of any $(PS)$ sequence of
the corresponding energy functional. Assume in addition that $f$ satisfies
the subcritical or critical growth condition
\begin{equation}\label{subcritical}
|f(x,t)|\leq C(|t|^{q-1}+1),  \  t \in \mathbb{R}, \ x \in \Omega,
\end{equation}
where $C>0$, $2<q\leq2^*=\frac{2N}{N-2}$. Combining this with \eqref{4-super},
one has $q>4$, which, together with $q\leq2^*$, implies that $N<4$. Hence, the cases when $N\leq 3$ and $N\geq 4$
are usually treated separately. In particular, when $N=4$, the power corresponding to the critical term is
equal to that corresponding to the nonlocal term in the energy functional, which makes the structure of the corresponding energy functional more complicated.
Moreover, due to the uncertainty of the sign of logarithmic term and the fact that the logarithmic term does not satisfy the standard Ambrosetti-Rabinowitz condition, the methods used in the above mentioned literature cannot be applied to critical Kirchhoff equations with logarithmic type perturbations.
This requires us to develop some new ideas.

In recent years, the research on the existence of solutions to elliptic equations with logarithmic terms has been achieving some interesting results.
In particular, the following critical elliptic problem with a logarithmic type perturbation
\begin{equation*}
\begin{cases}
-\Delta{u}=\lambda u+\mu u\ln u^{2}+|u|^{2^{*}-2}u,&x\in\Omega,\\
u(x)=0,&x\in\partial\Omega
\end{cases}
\end{equation*}
was considered by Deng et al. \cite{DengHe2023}, where $\Omega\subset\mathbb{R}^N~(N\geq3)$ is a bounded domain with
smooth boundary $\partial\Omega$, $\lambda, \mu \in \mathbb{R}$ are parameters and $2^{*}=\frac{2N}{N-2}$ is the
critical Sobolev exponent for the embedding $H_0^1(\Omega)\hookrightarrow L^{2^*}(\Omega)$. With the help of Mountain
Pass Lemma and some delicate estimates on the logarithmic term, they obtained the existence of a positive mountain
pass type solution (which is also a ground state solution) when $\lambda\in \mathbb{R}$, $\mu>0$ and $N\geq4$.
The existence of a positive weak solution was also obtained under some suitable assumptions on $\lambda, \mu$
when $N=3,4$ for the case $\mu<0$. Moreover, they proved a nonexistence result when $N\geq3$. Later, these results were
extended to critical fourth order elliptic problem with a logarithmic type perturbation by Li et al. \cite{LHW2023}.
Recently, the authors of this paper \cite{ZhangHanWang2023} weakened part of existence condition
for the case $\mu<0$ in \cite{LHW2023} and specified the types and the energy levels of the solutions by using
Br\'{e}zis-Lieb's lemma and Ekeland's variational principle. For more results on the existence and multiplicity of
solutions to local or nonlocal elliptic equations with logarithmic type nonlinearities, we refer the interested
readers to \cite{LiHan2023,Shuaiwei, Tianshuying2017, Shuaiwei2, Gao2023,Squassina, Wang2019} and the references therein.

Inspired by the above works described,  the object of this paper is to consider a critical Kirchhoff type elliptic problem with a logarithmic type subcritical perturbation  in a bounded domain in $\mathbb{R}^4$, i.e., problem \eqref{eq1}. We view this problem as an elliptic equation perturbed by a nonlocal term and
investigate how the nonlocal term affects the existence of weak solutions to problem \eqref{eq1}. It is worth noting that the presence of a nonlocal term, together with a critical nonlinearity and a logarithmic term prevents to apply in a straightforward way the classical critical point theory.
To be a little more precise, the critical term makes the corresponding energy functional lack of compactness, and since the nonlocal term is involved,
it is hard to deduce from  $u_n\rightharpoonup u$ weakly in $H_0^1(\Omega)$ that the convergence
$\int_{\Omega}|\nabla u_n|^2\mathrm{d}x\rightarrow\int_{\Omega}|\nabla u|^2\mathrm{d}x$
in general. Moreover, the logarithmic nonlinearity is sign-changing and satisfies neither the
monotonicity condition nor the Ambrosetti-Rabinowitz condition, which makes the study of problem
\eqref{eq1} more challenging.

To overcome the above mentioned difficulties, we borrow some ideas from \cite{Hajaiej2024, ZhangHanWang2023,LHW2023, DengHe2023} and put together Ekeland's variational principle, Br\'{e}zis-Lieb's lemma and some careful analysis on the logarithmic term to prove that
problem \eqref{eq1} admits a local minimum solution and a least energy solution under some appropriate assumptions on $b, \lambda, \mu$.
Moreover, under some further assumptions, the local minimum solution is also a least energy solution.
Comparing our results with the ones in \cite{DengHe2023} and \cite{Hajaiej2024}, one can see
that the range of the parameters $\lambda$ and $\mu$ for problem \eqref{eq1} with $b>0$ to admit
weak solutions is larger than that with $b=0$ in dimension four, which means that the nonlocal term
plays a positive role for problem \eqref{eq1} to admit weak solutions.

The organization of this paper is as follows. In Section $2$, we introduce some notations, definitions and necessary lemmas.
The main results of this paper are also stated here. The detailed proof of the main results is given in Section $3$.

\section{Preliminaries and the main results}

We begin this section with some notations and definitions that will be used throughout the paper.
The notation $|\Omega|$ means the Lebesgue measure of $\Omega$ in $\mathbb{R}^4$.
We use $\|\cdot\|_p$ to denote the usual $L^p(\Omega)$ norm for $1\leq p\leq\infty$ and denote the
norm of the Sobolev space $H_0^1(\Omega)$ by $\|\cdot\|:=\|\nabla \cdot\|_2$. For each Banach space $B$,
we use $\rightarrow$ and $\rightharpoonup$ to denote the strong and weak convergence in it, respectively.
By $H^{-1}(\Omega)$ we denote the dual space of $H_0^1(\Omega)$ and the dual pair between $H_0^1(\Omega)$
and $H^{-1}(\Omega)$ is written as $\langle\cdot,\cdot\rangle$. We use $C$ to denote a generic positive constant
which may vary from line to line and denote by $o_{n}(1)$ an infinitesimal as $n\rightarrow \infty$.
Moreover, $\lambda_1(\Omega)$ denotes the first eigenvalue of $(-\Delta, H_0^1(\Omega))$ and $S$ denotes
the best embedding constant from $H_0^1(\Omega)$ to $L^4(\Omega)$, i.e.,
\begin{align}\label{ineq1}
\|u\|_4\leq S^{-\frac{1}{2}}\|u\|, \ \ \ \forall\,u\in H_0^1(\Omega).
\end{align}

In this paper, the solutions to problem \eqref{eq1} are considered in the following weak sense.
\begin{definition}\label{weaksolution}$\mathrm{\bf{(Weak \ solution)}}$
A function $u\in H_0^1(\Omega)$ is called a weak solution to problem \eqref{eq1}, if it holds that
$$(1+b\|u\|^2)\int_{\Omega}\nabla u\nabla \phi\mathrm{d}x-\lambda\int_{\Omega}u\phi\mathrm{d}x-\mu\int_{\Omega}u\phi\ln u^2\mathrm{d}x-
\int_{\Omega}|u|^2u\phi\mathrm{d}x=0, \ \forall\,\phi\in H_0^1(\Omega). $$
\end{definition}

In order to look for weak solutions to problem \eqref{eq1} in the framework of variational methods,
we introduce the energy functional associated with problem \eqref{eq1} and its Fr\'{e}chet derivative,
which are given, respectively, by
\begin{align}\label{functional}
I(u)=\frac{1}{2}\|u\|^2+\frac{b}{4}\|u\|^4-\frac{\lambda}{2}\|u\|_2^2+\frac{\mu}{2}\|u\|_2^2-\frac{\mu}{2}\int_{\Omega}u^2\ln{u^2}\mathrm{d}x
-\frac{1}{4}\|u\|_4^4, ~~~\forall\,u\in H_0^1(\Omega),
\end{align}
and for all $u,\phi\in H_0^1(\Omega)$,
\begin{align}\label{frechet}
\langle I'(u),\phi\rangle=(1+b\|u\|^2)\int_{\Omega}\nabla u\nabla \phi\mathrm{d}x-\lambda\int_{\Omega}u\phi\mathrm{d}x-\mu\int_{\Omega}u\phi\ln u^2\mathrm{d}x-\int_{\Omega}|u|^2u\phi\mathrm{d}x.
\end{align}
It is easily verified that the energy functional $I(u)$ is a $\mathcal{C}^1$ functional in $H_{0}^{1}(\Omega)$ and
every critical point of $I(u)$ is a weak solution to problem \eqref{eq1}.

In order to overcome the difficulties caused by the nonlocal term, the logarithmic term and the critical term,
we introduce the following three important lemmas. The first one is Br\'{e}zis-Lieb's lemma which guarantees
the convergence of the minimizing sequence, the second one is Eklend's variational principle and the third one
presents some basic inequalities that can be used to deal with the logarithmic term.

\begin{lemma}\label{lem-Lieb}(Br\'{e}zis-Lieb's lemma \cite{BreLieb})
Let $p\in(0,\infty)$. Suppose that $\{u_n\}$ is a bounded sequence in $L^p(\Omega)$ and $u_n\rightarrow u$ a.e. in  $\Omega$.
Then
\begin{eqnarray*}
&\lim\limits_{n\rightarrow\infty}(\|u_n\|_p^p-\|u_n-u\|_p^p)=\|u\|_p^p.
\end{eqnarray*}
\end{lemma}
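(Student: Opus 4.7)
The plan is to follow the classical Br\'ezis--Lieb argument, based on a pointwise elementary inequality combined with the dominated convergence theorem. First I would use Fatou's lemma together with the $L^p$ boundedness of $\{u_n\}$ and the a.e.\ convergence $u_n\to u$ to conclude that $u\in L^p(\Omega)$, so that $\|u\|_p^p$ is finite and the sequence $\{u_n-u\}$ is also bounded in $L^p(\Omega)$.

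The core step is the elementary inequality: for every $\varepsilon>0$ there exists $C_\varepsilon>0$ such that
\begin{equation*}
\bigl|\,|a+b|^p-|a|^p-|b|^p\,\bigr|\leq \varepsilon\,|a|^p+C_\varepsilon\,|b|^p,\qquad \forall\,a,b\in\mathbb{R},
\end{equation*}
which I would derive from the mean value theorem applied to $t\mapsto|t|^p$ together with Young's inequality. Applying this with $a=u_n-u$ and $b=u$ gives, pointwise a.e.\ in $\Omega$,
\begin{equation*}
\bigl|\,|u_n|^p-|u_n-u|^p-|u|^p\,\bigr|\leq \varepsilon\,|u_n-u|^p+C_\varepsilon\,|u|^p.
\end{equation*}

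Next, I would introduce the auxiliary function
\begin{equation*}
f_{n,\varepsilon}(x):=\max\Bigl\{\bigl|\,|u_n|^p-|u_n-u|^p-|u|^p\,\bigr|-\varepsilon|u_n-u|^p,\;0\Bigr\}.
\end{equation*}
By the a.e.\ convergence $u_n\to u$, we have $f_{n,\varepsilon}\to 0$ a.e.\ in $\Omega$, and by the inequality above $f_{n,\varepsilon}\leq C_\varepsilon|u|^p\in L^1(\Omega)$. Lebesgue's dominated convergence theorem then yields $\int_\Omega f_{n,\varepsilon}\,\mathrm{d}x\to 0$ as $n\to\infty$. Using the definition of $f_{n,\varepsilon}$, we obtain
\begin{equation*}
\int_\Omega \bigl|\,|u_n|^p-|u_n-u|^p-|u|^p\,\bigr|\,\mathrm{d}x\leq \varepsilon\int_\Omega|u_n-u|^p\,\mathrm{d}x+\int_\Omega f_{n,\varepsilon}\,\mathrm{d}x.
\end{equation*}
Since $\{u_n-u\}$ is bounded in $L^p(\Omega)$ by some constant $M$, letting $n\to\infty$ gives
\begin{equation*}
\limsup_{n\to\infty}\int_\Omega\bigl|\,|u_n|^p-|u_n-u|^p-|u|^p\,\bigr|\,\mathrm{d}x\leq \varepsilon M.
\end{equation*}
Letting finally $\varepsilon\to 0^+$ yields $\int_\Omega(|u_n|^p-|u_n-u|^p)\,\mathrm{d}x\to\int_\Omega|u|^p\,\mathrm{d}x$, which is the desired conclusion.

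The main obstacle is the elementary pointwise inequality, whose constants must be chosen so that the $\varepsilon|a|^p$ term absorbs the unbounded part of the difference while $C_\varepsilon|b|^p$ provides an integrable majorant independent of $n$; once this is established, the rest of the argument is a routine application of dominated convergence and the boundedness of $\{u_n\}$.
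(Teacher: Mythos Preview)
Your argument is the classical Br\'ezis--Lieb proof and is correct. Note, however, that the paper does not supply its own proof of this lemma: it is stated with a citation to \cite{BreLieb} and used as a black box later in Lemma~\ref{le3.3}. So there is nothing in the paper to compare your proposal against; your write-up simply reproduces (accurately) the standard proof from the cited reference.
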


\begin{lemma}\label{Ekeland}(Ekeland's variational principle \cite{Ekeland})
Let $V$ be a complete metric space, and $F: V\rightarrow \mathbb{R}\cup\{+\infty\}$ be a lower semicontinuous function,
not identically $+\infty$, and bounded from below. Then for every $\varepsilon,\delta>0$ and every point $v\in V$ such that
\begin{align*}
\inf\limits_{h\in V} F(h)\leq F(v)\leq\inf\limits_{h\in V} F(h)+\varepsilon,
\end{align*}
there exists some point $u\in V$ such that
\begin{align*}
&F(u)\leq F(v),\\
&d(u,v)\leq\delta,\\
F(w)>&F(u)-(\varepsilon/\delta) d(u,w), \ \ \forall\, w\in V, w\neq u,
\end{align*}
where $d(\cdot,\cdot)$ denotes the distance of two points in V.
\end{lemma}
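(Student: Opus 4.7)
The plan is to prove Ekeland's variational principle via the classical partial-order/iterated-minimization argument, which avoids any use of differentiability or linear structure on $V$. First I would introduce a binary relation $\preceq$ on $V$ by declaring $w \preceq z$ if and only if $F(w) + (\varepsilon/\delta)\,d(z,w) \leq F(z)$, and verify that this is a genuine partial order: reflexivity is immediate, antisymmetry follows from the positivity of $d$ off the diagonal, and transitivity uses the triangle inequality for $d$. The lower semicontinuity of $F$ ensures that for each $z \in V$ the set $S(z) := \{w \in V : w \preceq z\}$ is closed in $V$, and it obviously contains $z$.

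Next I would construct a sequence $\{v_n\}$ inductively by setting $v_1 = v$ and, given $v_n$, choosing $v_{n+1} \in S(v_n)$ with
\begin{equation*}
F(v_{n+1}) \leq \inf_{S(v_n)} F + \frac{1}{n}.
\end{equation*}
The crucial observation is that $w \preceq v_n$ implies $(\varepsilon/\delta)\, d(v_n, w) \leq F(v_n) - F(w)$, which controls the $v_n$-radius of $S(v_n)$ by the oscillation of $F$ on it. In particular, for any $w \in S(v_{n+1}) \subseteq S(v_n)$ one gets $(\varepsilon/\delta)\, d(v_{n+1}, w) \leq F(v_{n+1}) - F(w) \leq 1/n$, so the diameters of the nested closed sets $\{S(v_n)\}$ tend to zero. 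By the completeness of $V$, their intersection reduces to a single point $u$.

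It then remains to verify the three stated conclusions for this $u$. Since $u \in S(v_1) = S(v)$, one has $F(u) + (\varepsilon/\delta)\, d(u,v) \leq F(v)$, which immediately gives $F(u) \leq F(v)$ and, combined with the near-minimality assumption $F(v) \leq \inf_V F + \varepsilon \leq F(u) + \varepsilon$, also yields $d(u,v) \leq \delta$. For the strict inequality in the third conclusion I would argue by contradiction: if some $w \neq u$ satisfied $F(w) \leq F(u) - (\varepsilon/\delta)\, d(u,w)$, then $w \preceq u$, hence by transitivity $w \preceq v_n$ for every $n$, so $w \in \bigcap_n S(v_n) = \{u\}$, contradicting $w \neq u$.

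The principal subtlety I anticipate is the weighting built into the definition of $\preceq$: once the factor $\varepsilon/\delta$ is placed correctly, the descent estimate for $d(v_n, v_{n+1})$, the shrinking of the diameters of $S(v_n)$, and the final minimality of $u$ all fall out almost mechanically. The hypotheses of lower semicontinuity and boundedness below are used only to guarantee that the sets $S(z)$ are closed and that the approximate infima defining the $v_{n+1}$ are finite real numbers, so no further structural assumption on $V$ is needed.
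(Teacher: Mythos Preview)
Your argument is correct and is precisely the classical partial-order/Cantor-intersection proof of Ekeland's principle; all the steps you outline (the order $\preceq$, closedness of $S(z)$ via lower semicontinuity, the inductive choice of $v_{n+1}$, the diameter estimate $\mathrm{diam}\,S(v_{n+1})\leq 2\delta/(n\varepsilon)$, and the final contradiction for the strict inequality) go through exactly as you describe.

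Note, however, that the paper does \emph{not} supply its own proof of this lemma: it is stated with a citation to Ekeland's original article and then used as a black box in the proof of Lemma~\ref{le3.3}. So there is no ``paper's proof'' to compare against; you have simply filled in a standard result that the authors quoted. Your approach is essentially the one found in Ekeland's paper and in most textbook treatments, so nothing is lost or gained relative to the literature.
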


\begin{lemma}\label{logarithmic inequality}
$(1)$ For all $t\in(0,1]$, there holds
\begin{equation}\label{log1}
|t\ln t |\leq\frac{1}{e}.
\end{equation}

$(2)$ For all $t>0$, there holds
\begin{equation}\label{log2}
t^2-t^2\ln t^2\leq1.
\end{equation}

$(3)$ For any $\delta>0$, there holds
\begin{equation}\label{log4}
\frac{\ln t}{t^\delta}\leq \frac{1}{\delta e}, \qquad \forall\ t>0.
\end{equation}
\end{lemma}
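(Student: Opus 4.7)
The plan is to treat each of the three inequalities as a straightforward one-variable optimization problem: in every case I would define the relevant function, locate its critical point by differentiation, and read off the extremal value. No compactness or functional-analytic tools are required; these are elementary real-analytic estimates that later serve as bookkeeping inequalities for controlling the logarithmic term $u^2\ln u^2$.

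For part (1), I would set $f(t) = t \ln t$ on $(0,1]$ and compute $f'(t) = \ln t + 1$, which vanishes uniquely at $t = 1/e$. Since $f(0^+) = 0$, $f(1) = 0$, and $f(1/e) = -1/e$, the function is non-positive on $(0,1]$ with minimum $-1/e$, so $|f(t)| \leq 1/e$. For part (2), the substitution $s = t^2 > 0$ reduces the claim to showing $g(s) := s - s\ln s \leq 1$. Differentiating gives $g'(s) = -\ln s$, so $s = 1$ is the unique critical point; since $g''(s) = -1/s < 0$ the point is a maximum, and $g(1) = 1$ yields the inequality. For part (3), let $h(t) = (\ln t)/t^\delta$ for $t > 0$. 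On $(0,1]$ the numerator is non-positive and the denominator is positive, so $h(t) \leq 0 < 1/(\delta e)$ trivially. On $(1, \infty)$ I would compute
\begin{equation*}
h'(t) = \frac{1 - \delta \ln t}{t^{\delta+1}},
\end{equation*}
which vanishes uniquely at $t_* = e^{1/\delta}$, and at that point $h(t_*) = (1/\delta)/e = 1/(\delta e)$; a sign check on $h'$ confirms this is a global maximum on $(1,\infty)$, finishing the estimate.

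I do not anticipate any real obstacle in executing this plan: each part is a two-line calculus exercise, and the only minor care needed is to verify continuity/limiting behavior at the endpoints (particularly $t \to 0^+$ for part (1) and the split into $t \leq 1$ and $t > 1$ in part (3)) so that the critical-point value is indeed the global extremum over the stated domain.
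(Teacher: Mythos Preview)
Your proposal is correct: each of the three inequalities is precisely the elementary one-variable optimization you describe, and your critical-point computations and endpoint checks are accurate. Note that the paper itself does not supply a proof of this lemma---it is stated without proof as a collection of standard logarithmic estimates---so there is no authorial argument to compare against; your write-up would fill that gap cleanly.
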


To state our main results clearly, we introduce the following sets
\allowdisplaybreaks
\begin{align*}
\mathcal{A}_1:&=\left\{( b, \lambda, \mu)|\   b=\frac{1}{S^2}, \lambda\in[0,\lambda_1(\Omega)), \mu<0 \right\},\\
\mathcal{A}_2:&=\left\{( b, \lambda, \mu)|\   b=\frac{1}{S^2}, \lambda\in(-\infty,0)\cup[\lambda_1(\Omega),+\infty), \mu<0 \right\},\\
\mathcal{A}_3:&=\left\{(b, \lambda, \mu)|\ b>\frac{1}{S^2}, \lambda\geq0, \mu<0 \right\},\\
\mathcal{A}_4:&=\left\{(b, \lambda, \mu)|\  b>\frac{1}{S^2}, \lambda<0, \mu<0\right\},\\
\mathcal{A}_5:&=\left\{( b, \lambda, \mu)|\   0<b<\frac{1}{S^2}, \lambda\in[0,\lambda_1(\Omega)), \mu<0, \frac{(\lambda_1(\Omega)-\lambda)^2S^2}{4\lambda^2_1(\Omega)(1-bS^2)}+\frac{\mu}{2}|\Omega|>0\right\},\\
\mathcal{A}_6:&=\left\{(b, \lambda, \mu)|\  0<b<\frac{1}{S^2}, \lambda\in\mathbb{R}, \mu<0, \frac{S^2}{4(1-bS^2)}+\frac{\mu}{2}e^{-\frac{\lambda}{\mu}}|\Omega|>0\right\}.
\end{align*}

Now we are at the position to state our main results, which can be summarized into the following two theorems.

\begin{theorem}\label{th2.1}(Existence of a local minimum solution).
Assume that $(b, \lambda,\mu)\in \bigcup\limits_{i=1}^6 \mathcal{A}_i$. Set
$$c_{\rho}:=\inf_{u\in B_{\rho}}I(u),$$
where $B_{\rho}:=\{u\in H_0^1(\Omega): \|u\|\leq \rho\}$ and $\rho>0$ is given in
Lemma \ref{le3.1}. Then problem \eqref{eq1} admits a local minimum solution $u$
with $I(u)=c_\rho<0$.
\end{theorem}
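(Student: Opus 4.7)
The plan is to establish the result in three main steps by combining Ekeland's principle with a careful Brézis-Lieb analysis. First, I would verify that $c_\rho<0$ by constructing, for each of the six parameter regimes $\mathcal{A}_i$, a test function $u_0\in H_0^1(\Omega)$ with $\|u_0\|\leq\rho$ and $I(u_0)<0$. In the cases with $\lambda\in[0,\lambda_1(\Omega))$ (e.g.\ $\mathcal{A}_1$, $\mathcal{A}_3$, $\mathcal{A}_5$), scaled first eigenfunctions of $-\Delta$ are natural, since the quadratic term is negative and dominates for small amplitudes; in the remaining cases, one chooses suitable constant-sign test functions and exploits the explicit $|\Omega|$-conditions defining $\mathcal{A}_5, \mathcal{A}_6$, using Lemma \ref{logarithmic inequality} to control the $u^2\ln u^2$ contribution. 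Since Lemma \ref{le3.1} is assumed to give $\rho>0$ with $\inf_{\|u\|=\rho}I(u)\geq\alpha>0$, we obtain the key strict inequality $c_\rho<0<\inf_{\|u\|=\rho}I(u)$.

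Next, I would apply Ekeland's variational principle (Lemma \ref{Ekeland}) on the complete metric space $\overline{B_\rho}=\{u\in H_0^1(\Omega):\|u\|\leq\rho\}$ to produce a minimizing sequence $\{u_n\}\subset\overline{B_\rho}$ with $I(u_n)\to c_\rho$ and $(I|_{\overline{B_\rho}})'(u_n)\to 0$. The strict gap above forces $\|u_n\|<\rho$ for large $n$, so $\{u_n\}$ is a genuine $(PS)_{c_\rho}$ sequence for $I$ in all of $H_0^1(\Omega)$. Its boundedness yields, along a subsequence, $u_n\rightharpoonup u$ in $H_0^1(\Omega)$, $u_n\to u$ in $L^p(\Omega)$ for $1\leq p<4$, $u_n\to u$ a.e.\ in $\Omega$, and $\|u_n\|^2\to A^2$ for some $A\in[0,\rho]$. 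Passing to the limit in the weak equation is then routine for the $\lambda u$ and $\mu u\ln u^2$ terms (using strong $L^2$ convergence and the pointwise bound from Lemma \ref{logarithmic inequality}$(3)$), so the limit $u$ satisfies a perturbed equation involving the constant $A^2$ in place of $\|u\|^2$.

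The main obstacle is recovering strong convergence, since the critical exponent $u^3$ and the nonlocal factor $(1+b\|u_n\|^2)$ scale in exactly the same way in dimension four. Here I would use Brézis-Lieb (Lemma \ref{lem-Lieb}) with $p=4$ to split $\|u_n\|_4^4=\|u_n-u\|_4^4+\|u\|_4^4+o_n(1)$ together with the orthogonality identity $\|u_n\|^2=\|u_n-u\|^2+\|u\|^2+o_n(1)$. Setting $\ell:=\lim\|u_n-u\|^2$ and using $\langle I'(u_n),u_n\rangle\to 0$ together with the equation satisfied by $u$, I would derive a relation between $\ell$ and $\lim\|u_n-u\|_4^4$, of the form $(1+bA^2)\ell=\lim\|u_n-u\|_4^4$. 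Combining this with the Sobolev inequality \eqref{ineq1}, namely $\|u_n-u\|_4^4\leq S^{-2}\|u_n-u\|^4$, yields either $\ell=0$ or $\ell\geq S^2/(1+bA^2)$; the latter alternative would push $I(u)$ up to a level incompatible with $c_\rho<0$ (this is precisely where the defining conditions of the sets $\mathcal{A}_i$, which balance $b$ against $1/S^2$, come into play). Hence $\ell=0$, giving $u_n\to u$ strongly in $H_0^1(\Omega)$, so $I(u)=c_\rho<0=I(0)$ and $u$ is the desired nontrivial weak solution.
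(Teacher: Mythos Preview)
Your overall architecture---Ekeland on $\overline{B_\rho}$, then Br\'ezis--Lieb to recover strong convergence---is the same as the paper's. However, your Step~1 as written would not work. For $\lambda\in[0,\lambda_1(\Omega))$ the quadratic part $\tfrac12\|u\|^2-\tfrac{\lambda}{2}\|u\|_2^2$ is \emph{nonnegative}, not negative, even on the first eigenfunction; adding $\tfrac{\mu}{2}\|u\|_2^2$ only helps if $\mu<\lambda-\lambda_1(\Omega)$, which is not assumed in $\mathcal{A}_1,\mathcal{A}_3,\mathcal{A}_5$. The actual mechanism for $c_\rho<0$ is the logarithmic term: for any fixed $u\neq0$,
\[
I(tu)=t^2\Bigl(\tfrac12\|u\|^2-\tfrac{\lambda}{2}\|u\|_2^2+\tfrac{\mu}{2}\!\int_\Omega u^2(1-\ln u^2)\,\mathrm{d}x-\tfrac{\mu}{2}(\ln t^2)\|u\|_2^2\Bigr)+O(t^4),
\]
and since $\mu<0$ the bracket tends to $-\infty$ as $t\to0^+$, so $I(tu)<0$ for small $t$. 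This single argument covers all six regimes; no case distinction via eigenfunctions or the $|\Omega|$-conditions in $\mathcal{A}_5,\mathcal{A}_6$ is needed (those conditions are used only in Lemma~\ref{le3.1} to produce $\rho$).

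Your Step~3 has the right idea but is imprecise in two places. First, the dichotomy should read $\ell\geq S^2(1+bA^2)$, not $S^2/(1+bA^2)$. Second, the contradiction for $\ell>0$ does not come from ``incompatibility with $c_\rho<0$'' nor from the defining inequalities of the $\mathcal{A}_i$. The paper splits into two cases: when $bS^2\geq1$ (regimes $\mathcal{A}_1$--$\mathcal{A}_4$) the scalar inequality $\ell\bigl(1+b\|u\|^2+(b-S^{-2})\ell\bigr)\leq0$ forces $\ell=0$ directly; when $bS^2<1$ (regimes $\mathcal{A}_5,\mathcal{A}_6$) one instead uses the energy expansion $c_\rho=I(u)+\tfrac14\ell(1+b\|u\|^2)$ together with $I(u)\geq c_\rho$ (since $\|u\|\leq\rho$ by weak lower semicontinuity of the norm) to conclude $\ell=0$.
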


\begin{theorem}\label{th2.2}(Existence of a least energy solution).
Assume that $(b, \lambda,\mu)\in \bigcup\limits_{i=1}^6 \mathcal{A}_i$. When $(b, \lambda,\mu)\in \mathcal{A}_5\cup\mathcal{A}_6$, we suppose in addition that $2bS^2-1\geq0$.
Set $$c_{\mathcal{K}}:=\inf_{u\in \mathcal{K}}I(u),$$
where $\mathcal{K}=\{u\in H_0^1(\Omega):I'(u)=0\}$.
Then problem \eqref{eq1} admits a least energy solution $u$ with $I(u)=c_{\mathcal{K}}<0$.
\end{theorem}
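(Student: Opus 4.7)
The plan is to realize $c_{\mathcal{K}}$ as the limit of a minimizing sequence on $\mathcal{K}$, and then use Br\'{e}zis--Lieb together with the parameter restrictions defining the $\mathcal{A}_i$ to upgrade the weak limit to a strong one. From Theorem \ref{th2.1}, the local minimizer produced there is a critical point of $I$ with negative energy, so $\mathcal{K}\neq\emptyset$ and $c_{\mathcal{K}}\leq c_\rho<0$. I then fix a minimizing sequence $\{u_n\}\subset\mathcal{K}$ with $I(u_n)\to c_{\mathcal{K}}$. Since $\langle I'(u_n),u_n\rangle=0$, subtracting $\tfrac14\langle I'(u_n),u_n\rangle$ from $I(u_n)$ eliminates the two quartic terms and gives
\begin{align*}
I(u_n)=\frac{1}{4}\|u_n\|^2-\frac{\lambda-2\mu}{4}\|u_n\|_2^2-\frac{\mu}{4}\int_{\Omega}u_n^2\ln u_n^2\,\mathrm{d}x.
\end{align*}
Combining this identity with Lemma \ref{logarithmic inequality} (whose estimate \eqref{log4} absorbs the logarithmic term into an $\|u_n\|^{2+\varepsilon}$ with $\varepsilon>0$ small) and the Poincar\'{e}--Sobolev inequality yields a uniform bound $\|u_n\|\le C$ valid across $\mathcal{A}_1,\ldots,\mathcal{A}_6$.

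Passing to a subsequence, $u_n\rightharpoonup u$ in $H_0^1(\Omega)$, $u_n\to u$ in $L^p(\Omega)$ for $1\le p<4$, $u_n\to u$ a.e.\ in $\Omega$, and $\|u_n\|^2\to A^2$ for some $A\ge\|u\|$. Letting $n\to\infty$ in $\langle I'(u_n),\phi\rangle=0$ --- the gradient term by weak convergence, the logarithmic term by dominated convergence using $|s\ln s^2|\le C(|s|^{2-\varepsilon}+|s|^{2+\varepsilon})$ as an integrable dominator, and the critical term via $u_n^3\rightharpoonup u^3$ in $L^{4/3}(\Omega)$ --- yields the limit equation
\begin{align*}
(1+bA^2)\int_\Omega\nabla u\nabla\phi\,\mathrm{d}x-\lambda\int_\Omega u\phi\,\mathrm{d}x-\mu\int_\Omega u\phi\ln u^2\,\mathrm{d}x-\int_\Omega u^3\phi\,\mathrm{d}x=0,\quad\forall\,\phi\in H_0^1(\Omega).
\end{align*}
This is not yet $I'(u)=0$ because of the spurious coefficient $A^2$ in place of $\|u\|^2$.

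The main obstacle is precisely to upgrade $A$ to $\|u\|$. Setting $v_n:=u_n-u$, Lemma \ref{lem-Lieb} gives $\|u_n\|^2=\|v_n\|^2+\|u\|^2+o_n(1)$ and $\|u_n\|_4^4=\|v_n\|_4^4+\|u\|_4^4+o_n(1)$, while the strong $L^p$ convergence ($p<4$) makes the linear and logarithmic terms pass to the limit without remainder. Testing the limit equation with $u$ and comparing with $\langle I'(u_n),u_n\rangle=0$ produces
\begin{align*}
(1+bA^2)\,\ell=\lim_{n\to\infty}\|v_n\|_4^4,\qquad \ell:=\lim_{n\to\infty}\|v_n\|^2.
\end{align*}
Combined with $\|v_n\|_4^4\le S^{-2}\|v_n\|^4$ from \eqref{ineq1}, this forces the dichotomy $\ell=0$ or $\ell\ge S^2(1+bA^2)$. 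In the first alternative one gets strong convergence $u_n\to u$ in $H_0^1(\Omega)$, hence $I'(u)=0$, $u\in\mathcal{K}$, and $I(u)=c_{\mathcal{K}}<0$, so $u\neq 0$ is the required least energy solution. In the second alternative, re-substituting the Br\'{e}zis--Lieb decomposition into $I(u_n)$ yields the identity $c_{\mathcal{K}}=I(u)+\frac{\ell}{4}(1+b\|u\|^2)$, and inserting $\ell\ge S^2(1+bA^2)$ together with the explicit inequalities defining $\mathcal{A}_1,\ldots,\mathcal{A}_6$ (namely $bS^2\ge 1$ for $\mathcal{A}_1$--$\mathcal{A}_4$, and the two scalar inequalities appearing in $\mathcal{A}_5$ and $\mathcal{A}_6$) produces a lower bound on the right-hand side that contradicts $c_{\mathcal{K}}<0$. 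This case-by-case contradiction, where the six parameter regimes enter essentially, is the delicate technical core of the proof.
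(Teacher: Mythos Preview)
Your proof follows the paper's overall structure --- boundedness via $I(u_n)-\tfrac14\langle I'(u_n),u_n\rangle$, weak convergence, Br\'{e}zis--Lieb splitting, and a dichotomy on $\ell=\lim_n\|v_n\|^2$ --- and for the regimes $\mathcal{A}_1$--$\mathcal{A}_4$ (where $bS^2\ge 1$) your treatment is correct and equivalent to the paper's use of inequality \eqref{inequation5}, which forces $\ell=0$ directly.

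The genuine gap is in your handling of the second alternative for $\mathcal{A}_5$ and $\mathcal{A}_6$. You assert that inserting $\ell\ge S^2(1+bA^2)$ into $c_{\mathcal{K}}=I(u)+\tfrac{\ell}{4}(1+b\|u\|^2)$, together with the scalar conditions defining $\mathcal{A}_5,\mathcal{A}_6$, yields $c_{\mathcal{K}}\ge 0$, contradicting $c_{\mathcal{K}}<0$. But you have no a~priori control on $\|u\|$: bounding $I(u)$ from below by \eqref{inequa1} or \eqref{inequa2} gives only $I(u)\ge \tfrac12\|u\|^2-\tfrac{1-bS^2}{4S^2}\|u\|^4+\mathrm{const}$ (resp.\ the $\mathcal{A}_5$ version), and combining this with $\ell\ge \tfrac{S^2(1+b\|u\|^2)}{1-bS^2}$ produces an expression whose coefficient in $\|u\|^4$ is $\tfrac{2bS^2-1}{4(1-bS^2)S^2}$, which is \emph{negative} whenever $bS^2<\tfrac12$. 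Hence the claimed lower bound $c_{\mathcal{K}}\ge 0$ does not follow in that range, and the ``delicate technical core'' you defer to cannot be completed along the line you indicate.

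The paper does \emph{not} argue this way. In Lemma~\ref{le3.3} the contradiction for $\mathcal{A}_5\cup\mathcal{A}_6$ comes from the fact that the weak limit $u$ still belongs to the constraint set (there, $\overline{B_\rho}$) by weak lower semicontinuity of the norm, so that $I(u)\ge c_\rho$; this is incompatible with the identity $c_\rho=I(u)+\tfrac{\ell}{4}(1+b\|u\|^2)>I(u)$ when $\ell>0$. The paper then transplants this compactness argument verbatim to the sequence $\{u_n\}\subset\mathcal{K}$: the mechanism is the comparison $I(u)\ge c$ with the infimum itself, not a nonnegative lower bound manufactured from the parameter inequalities. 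Your route deviates here and, as written, would fail.
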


\begin{remark}\label{remark-1}
From Theorems \ref{th2.1} and \ref{th2.2}, it is easy to see that $c_{\rho}\geq c_{\mathcal{K}}$. On the other hand, if $u$ is a least energy solution to problem \eqref{eq1} obtained in Theorem \ref{th2.2}, one obtains that
\begin{align*}
0>c_\mathcal{K}=I(u)-\frac{1}{4}\langle I'(u),u\rangle=&\frac{1}{4}\|u\|^{2}-\frac{\lambda}{4}\|u\|_{2}^{2} +\frac{\mu}{2}\|u\|_{2}^{2}-\frac{\mu}{4}\int_{\Omega}u^{2}\ln u^{2}\mathrm{d}x\\
&\geq\frac{1}{4}\|u\|^{2}+\frac{\mu}{4}e^{1-\frac{\lambda}{\mu}}|\Omega|.
\end{align*}
The last inequality comes from a similar estimate to that in \eqref{logineq1}. Then, we have
$\|u\|^2<|\mu|e^{1-\frac{\lambda}{\mu}}|\Omega|$. Under the assumptions of Theorems \ref{th2.1} and \ref{th2.2}, if we further assume that
$$|\mu|e^{1-\frac{\lambda}{\mu}}|\Omega| \leq \rho^2,$$
where $\rho>0$ is given in Lemma \ref{le3.1}, then we have $\|u\|\leq \rho$, which implies
$c_{\rho}\leq c_{\mathcal{K}}$. Therefore, $c_{\rho}= c_{\mathcal{K}}$, and the local minimum solution is also a least energy solution to problem \eqref{eq1}.
\end{remark}


The existence of weak solutions to problem \eqref{eq1} given by Theorems \ref{th2.1} and \ref{th2.2} can be
described on the following $(\lambda,\mu)$ plane by Figure \ref{picture}, where the pink regions stand for
the existence of nontrivial solutions, $\eta_1$ and $\eta_2$ are defined as follows
\begin{align*}
&\eta_1 : \frac{(\lambda_1(\Omega)-\lambda)^2S^2}{4\lambda^2_1(\Omega)(1-bS^2)}+\frac{\mu}{2}|\Omega|=0,
&\eta_2 : \frac{S^2}{4(1-bS^2)}+\frac{\mu}{2}e^{-\frac{\lambda}{\mu}}|\Omega|=0.
\end{align*}

\begin{figure}[H]
\begin{minipage}[t]{0.5\linewidth}
\centering
\includegraphics[width=6cm,height=4cm]{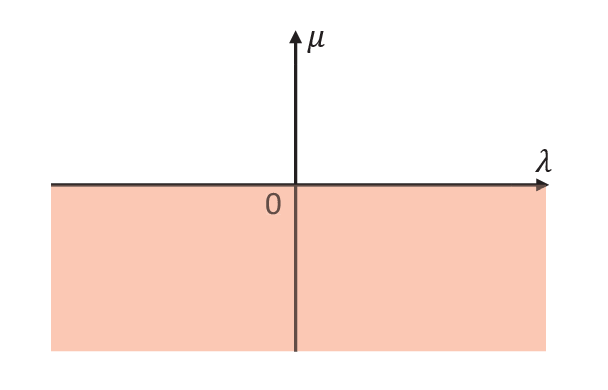}
\caption*{$(1) \ b\geq\frac{1}{S^2}$}\label{picture1}
\end{minipage}
\begin{minipage}[t]{0.5\linewidth}
\centering
\includegraphics[width=6cm,height=4cm]{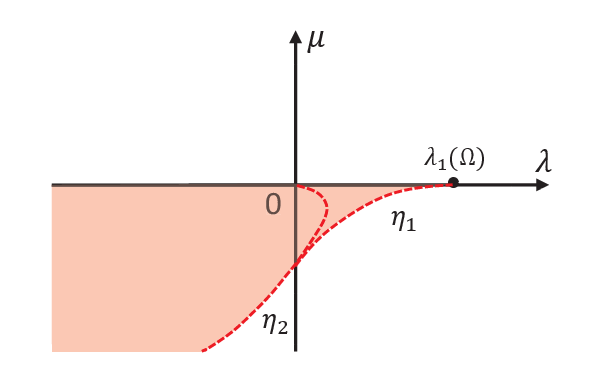}
\caption*{$(2) \ 0<b<\frac{1}{S^2}$ or $\frac{1}{2S^2}\leq b<\frac{1}{S^2}$}\label{picture2}
\end{minipage}
\caption{The existence of solutions}\label{picture}
\end{figure}

\begin{remark}\label{compare-1}
For the case $b=0$, Hajaiej et al. \cite{Hajaiej2024} showed that when $(\lambda,\mu)\in \mathcal{B}_1\cup\mathcal{B}_2$,
problem \eqref{eq1} admits a nontrivial solution $u$, which is a local minimum of the energy functional or a least energy solution.
Here
\begin{align*}
\mathcal{B}_1&=\left\{(\lambda, \mu)|\ \lambda\in \left[0, \lambda_1(\Omega)\right),\mu<0,
 \left(\frac{\lambda_{1}(\Omega)-\lambda}{\lambda_{1}(\Omega)}\right)
 ^2S^2+\frac{\mu}{2}|\Omega|>0\right\},\\
\mathcal{B}_2&=\left\{(\lambda, \mu)|\ \lambda\in \mathbb{R}, \mu<0,
 S^2+2\mu e^{-\frac{\lambda}{\mu}}|\Omega|>0\right\}.
\end{align*}
It is directly verified that when $0<b<1/S^2$, $(\lambda,\mu)\in \mathcal{B}_1$ implies $(b,\lambda,\mu)\in \mathcal{A}_5$,
while $(\lambda,\mu)\in \mathcal{B}_2$ implies $(b,\lambda,\mu)\in \mathcal{A}_6$. Moreover, when $b\geq 1/S^2$,
problem \eqref{eq1} admits a nontrivial solution for all $(\lambda,\mu)$ with $\mu<0$ and $\lambda\in \mathbb{R}$.
This means that the presence of the nonlocal term enlarges the range of the parameters $(\lambda,\mu)$ for problem
\eqref{eq1} to admit a nontrivial weak solution.
\end{remark}

\begin{remark}\label{compare-2}
Quite recently, the second author of this paper \cite{Han24} studied problem \eqref{eq1} with $\lambda=0$, $\mu>0$
and $bS^2<1<2bS^2$. By combining a result by Jeanjean \cite{J99} and a recent estimate by Deng et al. \cite{DengHe2023} with the Mountain Pass Lemma and Br\'{e}zis-Lieb's lemma, he proved that either the norm of the sequence of approximated solution goes to infinity or the original problem admits a mountain pass type solution. Furthermore, the former case can be excluded when $\Omega$ is star-shaped. In \cite{Han24} the condition $bS^2<1<2bS^2$ (which means that $b>0$ can be neither too large nor too small) plays a key role for the Palais-Smale sequence of the approximated problem to have a strongly convergent subsequence. The main differences between the two papers lie in the following three aspects: the first one is that Theorem \ref{th2.1} shows that for problem \eqref{eq1} to admit a nontrivial solution we allow $b$ to be any positive number; the second one is that the energy level of the solution obtained here is negative and the third one is that the existence result in this paper is valid for any bounded smooth domain.
\end{remark}

\par
\section{Proofs of the main results}
\setcounter{equation}{0}

The proof of the main results is based on several lemmas. We first verify that there are
two positive constants $\alpha$ and $\rho$ such that the energy functional $I(u)$
satisfies $I(u)\geq \alpha$ for all $u\in H_0^1(\Omega)$ with $\|u\|=\rho$.

\begin{lemma}\label{le3.1}
Assume that $(b, \lambda,\mu)\in \bigcup\limits_{i=1}^6 \mathcal{A}_i$.
Then there exist $\alpha,\rho>0$ such that $I(u)\geq \alpha$ for all $u\in H_0^1(\Omega)$ with $\|u\|=\rho$.
\end{lemma}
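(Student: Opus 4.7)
The plan is to produce, for each region $\mathcal A_i$, a scalar lower bound of the form $I(u)\geq P(\|u\|^2)$ with $P$ a quadratic polynomial, and then to pick a radius $\rho>0$ at which $P(\rho^2)>0$. Since $\mu<0$ throughout, the only nontrivial step is controlling the sign-changing logarithmic integral in a parameter-sensitive way.

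The first move is to derive a one-parameter lower bound on the log term by applying \eqref{log2} to $|u|/\sqrt c$ for an arbitrary $c>0$. Rearranging gives
\begin{equation*}
t^2\ln t^2\geq (1+\ln c)\,t^2-c,\qquad t,c>0,
\end{equation*}
so that $\int_\Omega u^2\ln u^2\,\mathrm{d}x\geq(1+\ln c)\|u\|_2^2-c|\Omega|$. Substituting this into \eqref{functional} (and using $-\mu/2>0$) together with $\|u\|_4^4\leq S^{-2}\|u\|^4$ from \eqref{ineq1} yields the master estimate
\begin{equation*}
I(u)\geq\frac{1}{2}\|u\|^2+\frac{bS^2-1}{4S^2}\|u\|^4-\frac{\lambda+\mu\ln c}{2}\|u\|_2^2+\frac{\mu c}{2}|\Omega|.
\end{equation*}

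I would now tune $c$ case by case. For $(b,\lambda,\mu)\in\mathcal A_1\cup\mathcal A_5$ (both have $\lambda\in[0,\lambda_1(\Omega))$) I take $c=1$ and estimate the surviving $-\tfrac{\lambda}{2}\|u\|_2^2$ by Poincar\'e's inequality $\|u\|_2^2\leq\|u\|^2/\lambda_1(\Omega)$, which produces
\begin{equation*}
P_1(x):=\frac{\lambda_1(\Omega)-\lambda}{2\lambda_1(\Omega)}x+\frac{bS^2-1}{4S^2}x^2+\frac{\mu}{2}|\Omega|,\qquad x=\|u\|^2.
\end{equation*}
In every remaining case I instead take $c=e^{-\lambda/\mu}$, which annihilates the coefficient of $\|u\|_2^2$ (so Poincar\'e is unnecessary) and leaves
\begin{equation*}
P_2(x):=\frac{1}{2}x+\frac{bS^2-1}{4S^2}x^2+\frac{\mu}{2}e^{-\lambda/\mu}|\Omega|.
\end{equation*}

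What is left is a one-variable maximisation. In $\mathcal A_1$--$\mathcal A_4$ one has $bS^2\geq 1$, so $P_1$ (resp.\ $P_2$) is nondecreasing and tends to $+\infty$ with $x$, and any sufficiently large $\rho$ does the job. In $\mathcal A_5$ and $\mathcal A_6$ one has $0<bS^2<1$, so $P_1,P_2$ are concave, and a direct computation gives
\begin{equation*}
\max P_1=\frac{(\lambda_1(\Omega)-\lambda)^2S^2}{4\lambda_1^2(\Omega)(1-bS^2)}+\frac{\mu}{2}|\Omega|,\qquad \max P_2=\frac{S^2}{4(1-bS^2)}+\frac{\mu}{2}e^{-\lambda/\mu}|\Omega|,
\end{equation*}
attained at $x_1^{*}=\tfrac{(\lambda_1(\Omega)-\lambda)S^2}{\lambda_1(\Omega)(1-bS^2)}$ and $x_2^{*}=\tfrac{S^2}{1-bS^2}$ respectively. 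These expressions are exactly the defining positivity conditions of $\mathcal A_5$ and $\mathcal A_6$, so setting $\rho=\sqrt{x_i^{*}}$ and taking $\alpha$ equal to the corresponding maximum concludes each case. The only real obstacle is the logarithmic term: the pointwise bound \eqref{log1} is too crude once $\|u\|$ is bounded away from zero, and a single fixed application of \eqref{log2} (i.e.\ $c=1$) covers $\mathcal A_5$ but not $\mathcal A_6$; introducing the free parameter $c$ and then optimising over it is precisely what makes the sharp conditions defining $\mathcal A_5$ and $\mathcal A_6$ emerge as argmax values of the two lower-bound parabolas.
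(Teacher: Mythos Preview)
Your argument is correct and lands on exactly the same two scalar lower bounds as the paper (your $P_1$ and $P_2$ coincide with the paper's $g$ and $h$), after which the case analysis and the maximisation are identical. The only real difference is in how the logarithmic contribution is controlled: the paper treats the two groups separately, using \eqref{log2} directly in one case and, in the other, a change of variables followed by a domain split $\{e^{\lambda/\mu-1}u^2\lessgtr 1\}$ together with \eqref{log1}. Your parametrised inequality $t^2\ln t^2\geq (1+\ln c)t^2-c$, obtained by scaling in \eqref{log2}, packages both cases at once and removes the need for the domain decomposition; choosing $c=1$ reproduces the paper's Case~1 bound and choosing $c=e^{-\lambda/\mu}$ reproduces its Case~2 bound. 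This is a neat unification, though not a different strategy. A minor organisational point: your grouping $\{\mathcal A_1,\mathcal A_5\}$ versus $\{\mathcal A_2,\mathcal A_3,\mathcal A_4,\mathcal A_6\}$ differs from the paper's $\{\mathcal A_1,\mathcal A_3,\mathcal A_5\}$ versus $\{\mathcal A_2,\mathcal A_4,\mathcal A_6\}$, but both are valid since in $\mathcal A_3$ either bound suffices.
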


\begin{proof}
The proof is divided into two cases.

\noindent{\bf Case 1:} $(b, \lambda,\mu)\in \mathcal{A}_1\cup\mathcal{A}_3\cup\mathcal{A}_5$.

For all $u\in H_0^1(\Omega)\backslash \{0\}$, since $\mu<0$, by using \eqref{ineq1}, \eqref{log2} and recalling
the definition of $\lambda_1(\Omega)$, one has
\begin{align}\label{inequa1}
I(u)&=\frac{1}{2}\|u\|^2+\frac{b}{4}\|u\|^4-\frac{\lambda}{2}\|u\|_2^2+\frac{\mu}{2}\|u\|_2^2-\frac{\mu}{2}\int_{\Omega}u^2\ln{u^2}\mathrm{d}x
-\frac{1}{4}\|u\|_4^4\nonumber\\
&=\frac{1}{2}\|u\|^2+\frac{b}{4}\|u\|^4-\frac{\lambda}{2}\|u\|_2^2+\frac{\mu}{2}\int_{\Omega}u^2\left(1-\ln{u^2}\right)\mathrm{d}x-\frac{1}{4}\|u\|_4^4\nonumber\\
&\geq \frac{1}{2}\|u\|^2+\frac{b}{4}\|u\|^4-\frac{\lambda}{2}\|u\|_2^2+\frac{\mu}{2}|\Omega|-\frac{1}{4}\|u\|_4^4\nonumber\\
&\geq \frac{1}{2}\|u\|^2+\frac{b}{4}\|u\|^4-\frac{\lambda}{2\lambda_1(\Omega)}\|u\|^2+\frac{\mu}{2}|\Omega|-\frac{1}{4S^2}\|u\|^4\nonumber\\
&=\frac{\lambda_1(\Omega)-\lambda}{2\lambda_1(\Omega)}\|u\|^2+\frac{bS^2-1}{4S^2}\|u\|^4+\frac{\mu}{2}|\Omega|.
\end{align}
Set
\begin{align*}
g(t)=\frac{\lambda_1(\Omega)-\lambda}{2\lambda_1(\Omega)}t^2+\frac{bS^2-1}{4S^2}t^4+\frac{\mu}{2}|\Omega|, \ \ t>0.
\end{align*}
Then
\begin{align*}
g'(t)=t\left(\frac{bS^2-1}{S^2}t^2+\frac{\lambda_1(\Omega)-\lambda}{\lambda_1(\Omega)}\right), \ \ t>0.
\end{align*}

When $(b, \lambda,\mu)\in \mathcal{A}_1\cup\mathcal{A}_3$, it is directly verified from \eqref{inequa1} that
there exist $\alpha,\rho>0$ such that $I(u)\geq \alpha$ for all $\|u\|=\rho$.

When $(b, \lambda,\mu)\in \mathcal{A}_5$, $g(t)$ takes its maximum at $t_g:=\left(\frac{(\lambda_{1}(\Omega)-\lambda)S^2}{\lambda_{1}(\Omega)(1-bS^2)}\right)^{\frac{1}{2}}$
and
\begin{eqnarray*}
g(t_g)=\frac{(\lambda_1(\Omega)-\lambda)^2S^2}{4\lambda^2_1(\Omega)(1-bS^2)}+\frac{\mu}{2}|\Omega|>0,
\end{eqnarray*}
which ensures the conclusion of this lemma with $\alpha=g(t_g)$ and $\rho= t_g$.

\noindent{\bf Case 2:} $(b, \lambda,\mu)\in \mathcal{A}_2\cup\mathcal{A}_4\cup\mathcal{A}_6$.

For all $u\in H_0^1(\Omega)\backslash \{0\}$, since $\mu<0$, it follows from \eqref{log1} that
\begin{align}\label{logineq1}
&-\frac{\lambda}{2}\|u\|_{2}^{2}+\frac{\mu}{2}\|u\|_{2}^{2}-\frac{\mu}{2}\int_{\Omega}u^{2}\ln u^{2}\mathrm{d}x\nonumber\\
=&-\frac{\mu}{2}\int_{\Omega}u^{2}\left(\frac{\lambda}{\mu}-1+\ln u^{2}\right)\mathrm{d}x\nonumber\\
=&-\frac{\mu}{2}e^{1-\frac{\lambda}{\mu}}\int_{\Omega}e^{\frac{\lambda}{\mu}-1}u^{2}\ln \left(e^{\frac{\lambda}{\mu}-1}u^{2}\right)\mathrm{d}x\nonumber\\
=&-\frac{\mu}{2}e^{1-\frac{\lambda}{\mu}}\int_{\{x\in\Omega:\,e^{\frac{\lambda}{\mu}-1}u^{2}(x)>1\}}
e^{\frac{\lambda}{\mu}-1}u^{2}\ln\left(e^{\frac{\lambda}{\mu}-1}u^{2}\right)\mathrm{d}x\nonumber\\
&-\frac{\mu}{2}e^{1-\frac{\lambda}{\mu}}\int_{\{x\in\Omega:\,e^{\frac{\lambda}{\mu}-1}u^{2}\leq1\}}e^{\frac{\lambda}{\mu}-1}u^{2}
\ln\left(e^{\frac{\lambda}{\mu}-1}u^{2}\right)\mathrm{d}x\nonumber\\
\geq&-\frac{\mu}{2}e^{1-\frac{\lambda}{\mu}}\int_{\{x\in\Omega:\,e^{\frac{\lambda}{\mu}-1}u^{2}\leq1\}}e^{\frac{\lambda}{\mu}-1}u^{2}
\ln\left(e^{\frac{\lambda}{\mu}-1}u^{2}\right)\mathrm{d}x\nonumber\\
\geq& \ \frac{\mu}{2}e^{-\frac{\lambda}{\mu}}|\Omega|.
\end{align}
Consequently, in view of $\mu<0$, \eqref{ineq1} and \eqref{logineq1}, one has
\begin{align}\label{inequa2}
I(u)&=\frac{1}{2}\|u\|^2+\frac{b}{4}\|u\|^4-\frac{\lambda}{2}\|u\|_2^2+\frac{\mu}{2}\|u\|_2^2-\frac{\mu}{2}\int_{\Omega}u^2\ln{u^2}\mathrm{d}x
-\frac{1}{4}\|u\|_4^4\nonumber\\
&=\frac{1}{2}\|u\|^2+\frac{b}{4}\|u\|^4-\frac{\mu}{2}\int_{\Omega}u^2\left(\frac{\lambda}{\mu}-1+\ln{u^2}\right)\mathrm{d}x-\frac{1}{4}\|u\|_4^4\nonumber\\
&\geq \frac{1}{2}\|u\|^2+\frac{b}{4}\|u\|^4+\frac{\mu}{2}e^{-\frac{\lambda}{\mu}}|\Omega|-\frac{1}{4S^2}\|u\|^4\nonumber\\
&=\frac{1}{2}\|u\|^2+\frac{bS^2-1}{4S^2}\|u\|^4+\frac{\mu}{2}e^{-\frac{\lambda}{\mu}}|\Omega|.
\end{align}
Set
\begin{align*}
h(t)=\frac{1}{2}t^2+\frac{bS^2-1}{4S^2}t^4+\frac{\mu}{2}e^{-\frac{\lambda}{\mu}}|\Omega|, \ \ t>0.
\end{align*}
Then
\begin{align*}
h'(t)=t\left(1+\frac{bS^2-1}{S^2}t^2\right), \ \ t>0.
\end{align*}

Similar to the argument in Case 1, when $(b, \lambda,\mu)\in\mathcal{A}_2\cup\mathcal{A}_4$,
we see from \eqref{inequa2} that there exist two positive constants $\alpha$ and $\rho$ such
that $I(u)\geq \alpha$ for all $\|u\|=\rho$. When $(b, \lambda,\mu)\in \mathcal{A}_6$,
$h(t)$ takes its maximum at $t_h:=\left(\frac{S^2}{1-bS^2}\right)^{\frac{1}{2}}$
and
\begin{eqnarray*}
h(t_h)=\frac{S^2}{4(1-bS^2)}+\frac{\mu}{2}e^{-\frac{\lambda}{\mu}}|\Omega|>0.
\end{eqnarray*}
Consequently, by taking $\alpha=h(t_h)$ and $\rho= t_h$ one sees that $I(u)\geq\alpha$
for all $\|u\|=\rho$. The proof is complete.
\end{proof}

Next, we give some characterizations of the variational constants $c_\rho$ and $c_{\mathcal{K}}$.
\begin{lemma}\label{le3.2}
Assume that $(b, \lambda,\mu)\in \bigcup\limits_{i=1}^6 \mathcal{A}_i$.
Let $c_\rho$ and $c_{\mathcal{K}}$ be the variational constants given in Theorems \ref{th2.1} and
\ref{th2.2}, respectively. Then $-\infty<c_\rho,\ c_{\mathcal{K}}<0$.
\end{lemma}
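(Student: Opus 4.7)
The plan is to split the assertion $-\infty<c_\rho,\,c_{\mathcal{K}}<0$ into four inequalities and prove each one separately. For $c_\rho<0$, I would fix any $\varphi\in H_0^1(\Omega)\setminus\{0\}$ and expand, using the identity $\int_\Omega(t\varphi)^2\ln(t\varphi)^2\,dx=t^2\ln(t^2)\|\varphi\|_2^2+t^2\int_\Omega\varphi^2\ln\varphi^2\,dx$,
\[
I(t\varphi)=t^2\Bigl[\tfrac{1}{2}\|\varphi\|^2+\tfrac{\mu-\lambda}{2}\|\varphi\|_2^2-\tfrac{\mu}{2}\int_\Omega\varphi^2\ln\varphi^2\,dx\Bigr]-\tfrac{\mu}{2}\,t^2\ln(t^2)\|\varphi\|_2^2+O(t^4).
\]
Because $\mu<0$, the middle term equals $|\mu|\,t^2\ln t\,\|\varphi\|_2^2$, which is negative and dominates the $t^2$ bracket as $t\to 0^+$ (their ratio $|\ln t|\to\infty$). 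Hence $I(t\varphi)<0$ for every sufficiently small $t>0$, and shrinking $t$ further we may also ensure $\|t\varphi\|\le\rho$; this yields $c_\rho\le I(t\varphi)<0$. For $c_\rho>-\infty$, on $\overline{B_\rho}$ the Poincar\'e and Sobolev inequalities give $\|u\|_2^2\le\rho^2/\lambda_1(\Omega)$ and $\|u\|_4^4\le\rho^4/S^2$, while Lemma~\ref{logarithmic inequality}(1) provides the pointwise bound $u^2\ln u^2\ge -1/e$, so $\int_\Omega u^2\ln u^2\,dx\ge -|\Omega|/e$. Summing, $I$ is bounded below by an explicit constant on $\overline{B_\rho}$.

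For $c_{\mathcal{K}}>-\infty$, I would use $\langle I'(u),u\rangle=0$ for $u\in\mathcal{K}$: the combination $4I(u)-\langle I'(u),u\rangle$ annihilates the $b\|u\|^4$ and $\|u\|_4^4$ contributions, leaving
\[
4I(u)=\|u\|^2+(2\mu-\lambda)\|u\|_2^2-\mu\int_\Omega u^2\ln u^2\,dx.
\]
Jensen's inequality applied to the convex function $t\mapsto t\ln t$ against the normalised measure $dx/|\Omega|$ yields $\int_\Omega u^2\ln u^2\,dx\ge\|u\|_2^2\ln(\|u\|_2^2/|\Omega|)$. Substituting and setting $s=\|u\|_2^2$, the right-hand side is bounded below by $(2\mu-\lambda+\mu\ln|\Omega|)\,s-\mu s\ln s$ (dropping $\|u\|^2\ge 0$). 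Since $-\mu>0$, the function $s\mapsto -\mu s\ln s$ is coercive on $[0,\infty)$, so this expression admits a finite negative infimum and $I$ is uniformly bounded below on $\mathcal{K}$.

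The most delicate part is $c_{\mathcal{K}}<0$. Here the strategy is to show that $c_\rho$ is attained at a critical point. Given a minimising sequence $\{u_n\}\subset\overline{B_\rho}$ with $u_n\rightharpoonup u^\ast$, the Br\'ezis--Lieb lemma (Lemma~\ref{lem-Lieb}) applied to the $L^4$-norm, together with the standard decomposition of the $H_0^1$-norm and $\|u_n-u^\ast\|_4^4\le\|u_n-u^\ast\|^4/S^2$, gives
\[
I(u_n)\ge I(u^\ast)+\tfrac{1}{2}\|u_n-u^\ast\|^2+\tfrac{bS^2-1}{4S^2}\|u_n-u^\ast\|^4+o(1),
\]
where the compact $L^2$-embedding and dominated convergence (using Lemma~\ref{logarithmic inequality}) absorb the $L^2$ and logarithmic parts. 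When $bS^2\ge 1$ the right-hand side is clearly $\ge I(u^\ast)+o(1)$; when $bS^2<1$ the explicit values of $\rho$ in the regions $\mathcal{A}_5,\mathcal{A}_6$ from Lemma~\ref{le3.1} satisfy $\rho^2\le S^2/(1-bS^2)$, so that $\tfrac{1}{2}\|u_n-u^\ast\|^2+\tfrac{bS^2-1}{4S^2}\|u_n-u^\ast\|^4$ remains non-negative (in fact bounded below by $\tfrac{1}{4}\|u_n-u^\ast\|^2$). Thus $I$ is weakly lower semicontinuous on $\overline{B_\rho}$ and $u^\ast$ attains $c_\rho$; Lemma~\ref{le3.1} combined with $c_\rho<0$ forces $\|u^\ast\|<\rho$, so $u^\ast$ is an interior local minimiser and therefore a critical point, giving $c_{\mathcal{K}}\le I(u^\ast)=c_\rho<0$. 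The main obstacle is this last semicontinuity step: the quartic defect $\tfrac{bS^2-1}{4S^2}\|u_n-u^\ast\|^4$ is negative exactly in the subcritical-nonlocal regime $bS^2<1$, and the thresholds built into the definitions of $\mathcal{A}_5$ and $\mathcal{A}_6$ are precisely what prevent the critical Sobolev nonlinearity from producing a concentration defect that would invalidate the minimisation.
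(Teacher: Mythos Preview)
Your proof is correct, and the handling of $c_\rho$ is essentially the paper's (expanding $I(t\varphi)$ to isolate the dominant $-\tfrac{\mu}{2}t^2\ln t^2\,\|\varphi\|_2^2$ term, and bounding $I$ below on $\overline{B_\rho}$). The two genuine departures are in the $c_{\mathcal{K}}$ estimates.

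For $c_{\mathcal{K}}>-\infty$ the paper uses the same combination $I(u)-\tfrac14\langle I'(u),u\rangle$ but then rewrites $-\tfrac{\mu}{4}\int u^2\bigl(\tfrac{\lambda}{\mu}-2+\ln u^2\bigr)\,dx$ as an integral of $t\ln t$ evaluated at $t=u^2e^{\lambda/\mu-2}$ and applies \eqref{log1} on the sublevel set $\{t\le1\}$, obtaining the explicit lower bound $\tfrac{\mu}{4}e^{1-\lambda/\mu}|\Omega|$. Your Jensen argument is equally valid and perhaps more conceptual, though the paper's substitution yields a cleaner constant.

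For $c_{\mathcal{K}}<0$ the paper simply invokes the solution furnished by Theorem~\ref{th2.1} as an element of $\mathcal{K}$, giving $c_{\mathcal{K}}\le c_\rho<0$; Theorem~\ref{th2.1} itself is established via Ekeland's principle (Lemma~\ref{le3.3}), which upgrades a minimising sequence to a Palais--Smale sequence and then extracts strong convergence from Br\'ezis--Lieb together with the algebraic identity for $\langle I'(u_n),u_n\rangle$. Your route bypasses Ekeland entirely: you show directly that $I$ is sequentially weakly lower semicontinuous on $\overline{B_\rho}$, using that the concentration defect $\tfrac12\|u_n-u^\ast\|^2+\tfrac{bS^2-1}{4S^2}\|u_n-u^\ast\|^4$ stays non-negative because $\|u_n-u^\ast\|^2=\|u_n\|^2-\|u^\ast\|^2+o(1)\le\rho^2+o(1)\le S^2/(1-bS^2)+o(1)$ under the $\mathcal{A}_5,\mathcal{A}_6$ thresholds. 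This is a genuine alternative---the direct method instead of Ekeland---and is arguably more elementary; it also makes the lemma logically self-contained rather than forward-referencing Theorem~\ref{th2.1}. What the paper's Ekeland route buys is that it automatically produces $I'(u_n)\to0$, so strong convergence in Lemma~\ref{le3.3} simultaneously delivers the critical-point property; your argument instead infers $I'(u^\ast)=0$ a posteriori from interior minimality, which is equivalent here since $I\in C^1$.
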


\begin{proof}
(I) We first prove $-\infty<c_\rho<0$. When $(b, \lambda,\mu)\in \bigcup\limits_{i=1}^6 \mathcal{A}_i$,
it follows from \eqref{inequa1} and \eqref{inequa2} that, for any $u\in H_0^1(\Omega)$
\begin{align}\label{inequation3}
I(u)\geq\frac{\lambda_1(\Omega)-\lambda}{2\lambda_1(\Omega)}\|u\|^2+\frac{bS^2-1}{4S^2}\|u\|^4+\frac{\mu}{2}|\Omega|, \qquad (b, \lambda,\mu)\in \mathcal{A}_1\cup\mathcal{A}_3\cup \mathcal{A}_5,
\end{align}
or
\begin{align}\label{inequation4}
I(u)\geq\frac{1}{2}\|u\|^2+\frac{bS^2-1}{4S^2}\|u\|^4+\frac{\mu}{2}e^{-\frac{\lambda}{\mu}}|\Omega|,\qquad (b, \lambda,\mu)\in \mathcal{A}_2\cup\mathcal{A}_4\cup\mathcal{A}_6.
\end{align}
According to \eqref{inequation3} and \eqref{inequation4}, one has $I(u)>-\infty$ when $\|u\|\leq\rho$. Thus, $c_\rho>-\infty$.

To show that $c_\rho<0$, for any $u\in H_0^1(\Omega)\setminus\{0\}$,
consider $I(tu)$ with $t\geq0$. By the definition of $I(u)$ we know
\begin{align*}
I(tu)=t^2\left(\frac{1}{2}\| u\|^{2}+\frac{b}{4}t^2\|u\|^4-\frac{\lambda}{2}\|u\|_{2}^{2}-\frac{\mu}{2}\ln t^2\|u\|_{2}^{2}
+\frac{\mu}{2}\int_{\Omega}u^{2}(1-\ln u^{2})\mathrm{d}x-\frac{1}{4}t^2\|u\|_{4}^{4}\right).
\end{align*}
Noticing that $\mu<0$, it can be deduced from the above equality that there exists a $t_u>0$ suitably
small such that $\|t_u u\|\leq \rho$ and $I(t_uu)<0$, which implies that $c_\rho<0$.

(II) Next we show that $-\infty<c_{\mathcal{K}}<0$. Notice that $u\in\mathcal{K}$,
where $u$ is the solution obtained in Theorem \ref{th2.1}. Hence $c_\mathcal{K}\leq c_\rho<0$.
It remains to prove $c_{\mathcal{K}}>-\infty$ to complete the proof.

For any $u\in\mathcal{K}$, one obtains, with the help of \eqref{functional}, \eqref{frechet}
and \eqref{log1}, that
\begin{align*}
I(u)&=I(u)-\frac{1}{4}\langle I'(u),u\rangle\\
&=\frac{1}{4}\|u\|^{2}-\frac{\lambda}{4}\|u\|_{2}^{2} +\frac{\mu}{2}\|u\|_{2}^{2}-\frac{\mu}{4}\int_{\Omega}u^{2}\ln u^{2}\mathrm{d}x\\
&\geq-\frac{\mu}{4}\int_{\Omega}u^{2}\left(\frac{\lambda}{\mu}-2+\ln {u^2}\right)\mathrm{d}x\\
&=-\frac{\mu}{4}\int_{\Omega}u^{2}\ln \left(u^2e^{\frac{\lambda}{\mu}-2}\right)\mathrm{d}x\\
&=-\frac{\mu}{4}e^{2-\frac{\lambda}{\mu}}\int_{\Omega}u^{2}e^{\frac{\lambda}{\mu}-2}\ln \left(u^2e^{\frac{\lambda}{\mu}-2}\right)\mathrm{d}x\\
&\geq-\frac{\mu}{4}e^{2-\frac{\lambda}{\mu}}\int_{\{x\in\Omega:\ e^{\frac{\lambda}{\mu}-2}u^{2}\leq1\}}
u^{2}e^{\frac{\lambda}{\mu}-2}\ln \left(u^2e^{\frac{\lambda}{\mu}-2}\right)\mathrm{d}x\\
&\geq\frac{\mu}{4}e^{1-\frac{\lambda}{\mu}}|\Omega|,
\end{align*}
which implies that $c_{\mathcal{K}}>-\infty$. The proof is complete.
\end{proof}

The following compactness result, which is proved by combining Eklend's variational principle
with Brezis-Lieb's Lemma, plays a key role in proving the main results.

\begin{lemma}\label{le3.3}
Assume that $(b, \lambda,\mu)\in \bigcup\limits_{i=1}^6 \mathcal{A}_i$.
Then there exists a minimizing sequence $\{u_n\}$ for $c_\rho$ and a $u\in H_0^1(\Omega)$ such that, up to a subsequence, $u_n \rightarrow u$
in $H_0^1(\Omega)$ as $n\rightarrow\infty$, where $c_{\rho}:=\inf_{\|u\|\leq\rho}I(u)$.
\end{lemma}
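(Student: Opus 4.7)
\emph{Approach.} I would combine Ekeland's variational principle on the closed ball $\overline{B}_\rho := \{u \in H_0^1(\Omega) : \|u\| \leq \rho\}$ with a Br\'{e}zis--Lieb decomposition of the energy, exploiting that the eventual weak limit $u$ automatically lies in $\overline{B}_\rho$ to close the estimate. First I apply Lemma~\ref{Ekeland} to $V = \overline{B}_\rho$ (which is complete in the $H_0^1$ metric) with $F = I$: by Lemma~\ref{le3.2}, $I$ is bounded below on $V$ and $c_\rho \in (-\infty, 0)$. This produces $\{u_n\} \subset \overline{B}_\rho$ with $I(u_n) \to c_\rho$ and $I(w) \geq I(u_n) - \varepsilon_n \|w-u_n\|$ for every $w \in \overline{B}_\rho$, where $\varepsilon_n \downarrow 0$. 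Since Lemma~\ref{le3.1} gives $I \geq \alpha > 0$ on $\|u\|=\rho$ while $I(u_n)\to c_\rho<0$, one has $\|u_n\|<\rho$ eventually, so the tests $w = u_n \pm t\phi$ with small $t$ are admissible and yield $\|I'(u_n)\|_{H^{-1}} \to 0$.

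Boundedness gives, up to subsequences, $u_n \rightharpoonup u$ in $H_0^1(\Omega)$, $u_n \to u$ in $L^p(\Omega)$ for every $p \in [1,4)$, and $u_n \to u$ a.e.\ in $\Omega$. My next task is to check the two logarithmic convergences
\begin{equation*}
\int_\Omega u_n \phi \ln u_n^2\,dx \to \int_\Omega u \phi \ln u^2\,dx \quad \text{and} \quad \int_\Omega u_n^2 \ln u_n^2\,dx \to \int_\Omega u^2 \ln u^2\,dx
\end{equation*}
for each fixed $\phi \in H_0^1(\Omega)$. Applying Lemma~\ref{logarithmic inequality}(3) to $t$ and to $1/t$ yields $|\ln t^2| \leq C_\delta(t^{\delta} + t^{-\delta})$ for any small $\delta>0$; inserted into the two integrands this produces pointwise majorants that are uniformly integrable (by H\"older together with Rellich, taking $\delta$ small enough that the resulting exponents lie in $[1,4)$), and Vitali's theorem then concludes.

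Now set $A := \lim \|u_n - u\|^2 \geq 0$, so $\|u_n\|^2 \to A + \|u\|^2$, and $L := \lim \|u_n - u\|_4^4$. Passing to the limit in $\langle I'(u_n), \phi\rangle \to 0$ gives
\begin{equation*}
(1 + b(A + \|u\|^2)) \int_\Omega \nabla u \cdot \nabla \phi\, dx = \lambda \int_\Omega u \phi\, dx + \mu \int_\Omega u \phi \ln u^2\, dx + \int_\Omega u^3 \phi\, dx,
\end{equation*}
and comparing this at $\phi = u$ with the limit of $\langle I'(u_n), u_n\rangle \to 0$ (after the $L^4$ Br\'{e}zis--Lieb splitting from Lemma~\ref{lem-Lieb}) yields the key identity $L = A\bigl(1 + b(A + \|u\|^2)\bigr)$. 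Applying Lemma~\ref{lem-Lieb} also to the energy itself,
\begin{equation*}
I(u_n) = I(u) + \tfrac{1}{2}\|u_n - u\|^2 + \tfrac{b}{4}\bigl(\|u_n - u\|^2 + \|u\|^2\bigr)^2 - \tfrac{b}{4}\|u\|^4 - \tfrac{1}{4}\|u_n - u\|_4^4 + o_n(1),
\end{equation*}
and letting $n\to\infty$ and substituting the relation for $L$, the $A^2$ and $A\|u\|^2$ contributions cancel exactly to leave
\begin{equation*}
c_\rho = I(u) + \tfrac{A}{4}\bigl(1 + b\|u\|^2\bigr).
\end{equation*}
Weak lower semicontinuity of the norm yields $\|u\| \leq \rho$, hence $I(u) \geq c_\rho$ by definition of the infimum; as $1 + b\|u\|^2 > 0$, this forces $A = 0$, whence $L = 0$ as well, i.e.\ $u_n \to u$ strongly in $H_0^1(\Omega)$.

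I expect the delicate point to be precisely this final cancellation: it is only because the Br\'{e}zis--Lieb remainder in $\|u_n\|^4$ and the relation for $L$ extracted from $\langle I'(u_n),u_n\rangle\to 0$ interact exactly that the surviving remainder reduces to the manifestly nonnegative $\tfrac{A}{4}(1+b\|u\|^2)$, allowing the inequality $I(u) \geq c_\rho$ to kill $A$ \emph{uniformly} across all six parameter regions $\mathcal{A}_1,\dots,\mathcal{A}_6$ (the case-dependent conditions on $(b,\lambda,\mu)$ having been entirely absorbed into Lemmas~\ref{le3.1} and \ref{le3.2}). Verifying the logarithmic limits above via Vitali together with Lemma~\ref{logarithmic inequality} is technically the second-most subtle item, but is by now standard in this setting.
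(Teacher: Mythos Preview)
Your proof is correct and follows essentially the same route as the paper: Ekeland on the closed ball to produce a Palais--Smale minimizing sequence, weak/subcritical convergence, logarithmic convergence, Br\'{e}zis--Lieb splitting, and then the energy identity $c_\rho = I(u) + \tfrac{A}{4}(1+b\|u\|^2)$ combined with $I(u)\geq c_\rho$ to force $A=0$. The one noteworthy difference is organizational: the paper first extracts from $\langle I'(u_n),u_n\rangle\to 0$ the \emph{inequality} $l\bigl(1+b\|u\|^2+\tfrac{(bS^2-1)l}{S^2}\bigr)\leq 0$ and uses it to conclude $l=0$ directly when $bS^2\geq 1$ (cases $\mathcal{A}_1$--$\mathcal{A}_4$), reserving the energy-comparison argument only for $bS^2<1$ (cases $\mathcal{A}_5,\mathcal{A}_6$); you instead keep the \emph{equality} $L=A(1+b(A+\|u\|^2))$, substitute it into the Br\'{e}zis--Lieb expansion of $I(u_n)$, and run the energy comparison uniformly across all six regions. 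Your version is a genuine streamlining---the paper's case split is unnecessary---and as you correctly note, the parameter restrictions are fully absorbed into Lemmas~\ref{le3.1} and~\ref{le3.2}. (Your use of Vitali rather than dominated convergence for the logarithmic limits is a harmless variant; the paper uses the bound $|t^2\ln t^2|\leq e^{-1}+2e^{-1}|t|^3$ from Lemma~\ref{logarithmic inequality} and $L^3$-convergence.)
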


\begin{proof}
The technique of the proof is inspired by \cite{ZhangHanWang2023}, with some convergence tricks
to deal with the nonlocal term. For any $r>0$, set $B_r=\{u\in H_0^1(\Omega):\|u\|\leq r\}$.
Let $\{v_n\}\subset B_\rho$ be a minimizing sequence for $c_\rho$. Without
loss of generality, we may assume that $c_\rho\leq I(v_n)\leq c_\rho+\dfrac{1}{n}$ for all
$n\in\mathbb{N}$. Using the fact that $c_\rho<0$ and inequalities \eqref{inequa1} and \eqref{inequa2},
one knows that there is a constant $\tau>0$ suitably small such that $\|v_n\|\leq\rho-\tau$ for all $n$ large enough.
Moreover, in view of the continuity of $I(u)$, Lemmas \ref{Ekeland} and \ref{le3.2},
one sees that there exists another minimizing sequence $\{u_n\}\subset B_\rho$ for $c_\rho$ such that
\begin{align}\label{eke-ineq}
&I(u_n)\leq I(v_n),\nonumber\\
&\|u_n-v_n\|\leq\frac{\tau}{2},\nonumber\\
I(w)>&I(u_n)-\frac{2}{n\tau} \|w-u_n\|,\qquad\forall\ w\in B_\rho,\ w\neq u_n.
\end{align}
It is obvious that $\|u_n\|\leq \rho-\frac{\tau}{2}$.

For any $h\in H_0^1(\Omega)$ with $\|h\|=1$, set $z(t)=u_n+th$, $t\in(0,\frac{\tau}{2}]$.
Then $\|z(t)\|\leq \rho,\ \forall\ t\in(0,\frac{\tau}{2}]$. Taking $w=z(t)$ in \eqref{eke-ineq} implies
\begin{align}\label{eke-ineq2}
\frac{I(z(t))-I(u_n)}{t}>-\frac{2}{n\tau}.
\end{align}
Letting $t\rightarrow0^+$ in \eqref{eke-ineq2}, we arrive at
\begin{align}\label{fd1}
\langle I'(u_n), h \rangle\geq-\frac{2}{n\tau}.
\end{align}
Replacing $h$ by $-h$ in \eqref{fd1}, one has
\begin{align*}
\langle I'(u_n), h \rangle\leq\frac{2}{n\tau},
\end{align*}
which, together with \eqref{fd1} and the arbitrariness of $h$,
implies
\begin{equation}\label{approximated}
\lim_{n\rightarrow\infty}I'(u_n)=0.
\end{equation}

On the other hand, by the boundedness of $\{u_n\}$ we know that there is a subsequence
of $\{u_n\}$ (which we still denote by $\{u_n\}$) and a $u\in H_0^1(\Omega)$ such that,
as $n\rightarrow\infty$,
\begin{equation}\label{convergence}
\begin{cases}
u_{n}\rightharpoonup u \ in \ H_{0}^1(\Omega),\\
u_{n}\rightarrow u \ in \ L^p(\Omega), \ \ 1\leq p<4,\\
|u_n|^2u_n\rightharpoonup |u|^2u \ in \ L^{\frac{4}{3}}(\Omega),\\
u_{n}\rightarrow u \ \ a.e.\ in \ \Omega.
\end{cases}
\end{equation}
We claim that
\begin{align}\label{log-convergence1}
\lim_{n\rightarrow\infty}\int_{\Omega}u_n^{2}\ln u_n^2\mathrm{d}x
=\int_{\Omega}u^{2}\ln u^2\mathrm{d}x,
\end{align}
and
\begin{align}\label{log-convergence2}
\lim_{n\rightarrow\infty}\int_{\Omega}u_n\phi\ln u_n^2\mathrm{d}x
=\int_{\Omega}u\phi\ln u^2\mathrm{d}x,\qquad \forall\ \phi\in H_0^1(\Omega).
\end{align}
Indeed, since $u_n\rightarrow u$ a.e. in $\Omega$ as $n\rightarrow\infty$, there holds
\begin{align}\label{dui-1}
u_n^2\ln u_n^2\rightarrow u^2\ln u^2 \ \ a.e. \ in\ \Omega \ \  as \ n\rightarrow\infty.
\end{align}
Applying \eqref{log1}, \eqref{log4} with $\delta=\frac{1}{2}$ and recalling \eqref{convergence}, we have
\begin{align}\label{dui-2}
|u_n^2\ln u_n^2|
\leq\dfrac{1}{e}+\frac{2}{e}u_n^3\rightarrow\dfrac{1}{e}+\frac{2}{e}u^3 \ in\ L^1(\Omega)\ as\ n\rightarrow\infty.
\end{align}
Consequently, combining \eqref{dui-1} and \eqref{dui-2} with Lebesgue's dominated convergence theorem,
we obtain \eqref{log-convergence1}. \eqref{log-convergence2} follows in quite a similar way.

It remains to show that $u_n\rightarrow u \ in \ H_0^1(\Omega)$ as $n\rightarrow\infty$
to complete the proof. For this, set $w_n=u_n-u$. Then $\{w_n\}$ is also a bounded
sequence in $H_0^1(\Omega)$. Hence there exists a subsequence of $\{w_{n}\}$ (still denoted by $\{w_{n}\}$) such that
\begin{align}\label{wn}
\lim\limits_{n\rightarrow\infty}\|w_n\|^2=l\geq0.
\end{align}
On the one hand, by the weak convergence $u_{n}\rightharpoonup u$ in $H_{0}^1(\Omega)$, we have
\begin{align}\label{wn1}
\|u_n\|^{2}=\| w_n\|^2+\| u\|^{2}+o_n(1), \qquad  n\rightarrow\infty.
\end{align}
On the other hand, it follows from the boundedness of $\{u_n\}$ in $H_0^1(\Omega)$ and Sobolev
embedding theorem that $\|u_n\|_4\leq C$, which, together with $u_n\rightarrow u \ a.e. \ in \ \Omega$
and Lemma \ref{lem-Lieb}, implies that
\begin{align}\label{wn2}
\|u_n\|_4^4=\|w_n\|_4^4+\|u\|_4^4+o_n(1),\qquad n\rightarrow\infty.
\end{align}
Since $I'(u_n)\rightarrow0$ as $n\rightarrow\infty$, by using \eqref{convergence} and \eqref{log-convergence2}, we obtain, as $n\rightarrow\infty$,
\begin{align}\label{Iun-u}
o_n(1)=&\langle I'(u_n),u\rangle\nonumber\\
=&\left(1+b\|u_n\|^2\right)\int_{\Omega}\nabla u_n\nabla u\mathrm{d}x-\lambda\int_{\Omega}u_n u\mathrm{d}x-\mu\int_{\Omega}u_n u\ln u_n^2\mathrm{d}x-
\int_{\Omega}|u_n|^2u_n u\mathrm{d}x\nonumber\\
=&\|u\|^2+b\|u_n\|^2\|u\|^2-\lambda\|u\|_2^2-\mu\int_{\Omega}u^2\ln u^2\mathrm{d}x-\|u\|_4^4+o_n(1)\nonumber\\
=&\|u\|^2+b\|u\|^4+b\|w_n\|^2\|u\|^2-\lambda\|u\|_2^2-\mu\int_{\Omega}u^2\ln u^2\mathrm{d}x-\|u\|_4^4+o_n(1).
\end{align}
Then, by means of \eqref{convergence}, \eqref{log-convergence1}, \eqref{wn1}, \eqref{wn2} and \eqref{Iun-u}, we get
\begin{align}\label{Iun-un}
o_n(1)=&\langle I'(u_n),u_n\rangle\nonumber\\
=&\left(1+b\|u_n\|^2\right)\|u_n\|^2-\lambda \|u_n\|_2^2-\mu\int_{\Omega}u_n^2\ln u_n^2\mathrm{d}x-\|u_n\|_4^4\nonumber\\
=&\|u_n\|^2+b\|u_n\|^4-\lambda\|u\|_2^2-\mu\int_{\Omega}u^2\ln u^2\mathrm{d}x-\|u_n\|_4^4+o_n(1)\nonumber\\
=&\|w_n\|^2+\|u\|^2+b\|w_n\|^4+2b\|w_n\|^2\|u\|^2+b\|u\|^4-\lambda\|u\|_2^2-\mu\int_{\Omega}u^2\ln u^2\mathrm{d}x\nonumber\\
&-\|w_n\|_4^4-\|u\|_4^4+o_n(1)\nonumber\\
=&\langle I'(u_n),u\rangle+\|w_n\|^2+b\|w_n\|^4+b\|w_n\|^2\|u\|^2-\|w_n\|_4^4\nonumber\\
=&\|w_n\|^2+b\|w_n\|^4+b\|w_n\|^2\|u\|^2-\|w_n\|_4^4+o_n(1).
\end{align}
It follows from \eqref{ineq1} and  \eqref{Iun-un} that
\begin{align}\label{Iun-un-ineq}
o_n(1)\geq\|w_n\|^2+b\|w_n\|^4+b\|w_n\|^2\|u\|^2-\frac{1}{S^2}\|w_n\|^4.
\end{align}
Letting $n\rightarrow\infty$ in \eqref{Iun-un-ineq} and recalling \eqref{wn}, one gets
\begin{align}\label{inequation5}
l\left(1+b\|u\|^2+\frac{(bS^2-1)l}{S^2}\right)\leq0.
\end{align}

When $(b, \lambda,\mu)\in \mathcal{A}_1\cup\mathcal{A}_2\cup\mathcal{A}_3\cup\mathcal{A}_4$,
it can be directly checked from \eqref{inequation5} that $l=0$, i.e., $u_n\rightarrow u$ in
$H_0^1(\Omega)$ as $n\rightarrow\infty$.

When $(b, \lambda,\mu)\in \mathcal{A}_5\cup\mathcal{A}_6$, we assume by contradiction that $l>0$.
Then according to \eqref{convergence}, \eqref{log-convergence1}, \eqref{wn1}, \eqref{wn2} and \eqref{Iun-un}, we have
\begin{align}\label{Iun-Iu}
I(u_n)=&\frac{1}{2}\|u_n\|^2+\frac{b}{4}\|u_n\|^4-\frac{\lambda}{2}\|u_n\|_2^2+\frac{\mu}{2}\|u_n\|_2^2-\frac{\mu}{2}\int_{\Omega}u_n^2\ln{u_n^2}\mathrm{d}x
-\frac{1}{4}\|u_n\|_4^4\nonumber\\
=&\frac{1}{2}\|w_n\|^2+\frac{1}{2}\|u\|^2+\frac{b}{4}\|w_n\|^4+\frac{b}{4}\|u\|^4+\frac{b}{2}\|w_n\|^2\|u\|^2-\frac{\lambda}{2}\|u\|_2^2+\frac{\mu}{2}\|u\|_2^2\nonumber\\
&-\frac{\mu}{2}\int_{\Omega}u^2\ln{u^2}\mathrm{d}x-\frac{1}{4}\|w_n\|_4^4-\frac{1}{4}\|u\|_4^4+o_n(1)\nonumber\\
=&I(u)+\frac{1}{2}\|w_n\|^2+\frac{b}{4}\|w_n\|^4+\frac{b}{2}\|w_n\|^2\|u\|^2-\frac{1}{4}\|w_n\|_4^4+o_n(1)\nonumber\\
=&I(u)+\frac{1}{4}\langle I'(u_n),u_n\rangle+\frac{1}{4}\|w_n\|^2+\frac{b}{4}\|w_n\|^2\|u\|^2+o_n(1)\nonumber\\
=&I(u)+\frac{1}{4}\|w_n\|^2+\frac{b}{4}\|w_n\|^2\|u\|^2+o_n(1).
\end{align}
Letting $n\rightarrow\infty$ in \eqref{Iun-Iu} yields
\begin{align}\label{equation1}
c_\rho=\lim\limits_{n\rightarrow\infty}I(u_n)=I(u)+\frac{1}{4}l+\frac{b}{4}l\|u\|^2>I(u).
\end{align}
On the other hand, since $\|u_n\|<\rho$, it follows from the weak lower semi-continuity of the norm that $\|u\|\leq\rho$,
which implies  $I(u)\geq c_\rho$. This contradicts \eqref{equation1}. Hence $l=0$, i.e., $u_n\rightarrow u$ in $H_0^1(\Omega)$ as $n\rightarrow\infty$. This completes the proof.
\end{proof}

With the above lemmas at hand, we are now able to prove Theorems \ref{th2.1} and \ref{th2.2}.

{\bf Proof of Theorem \ref{th2.1}.} From Lemma \ref{le3.3} we know that there exists
a minimizing sequence $\{u_n\}\subset B_{\rho-\tau/2}$ for $c_\rho$ and a $u\in H_0^1(\Omega)$
such that $u_n \rightarrow u$ in $H_0^1(\Omega)$ as $n\rightarrow\infty$. Consequently,
$\|u\|\leq \rho-\tau/2$. Noticing that $I(u)$ is a $\mathcal{C}^1$ functional in $H_0^1(\Omega)$
and recalling \eqref{approximated}, one obtains
\begin{equation*}
\begin{cases}
I(u)=\lim\limits_{n\rightarrow\infty}I(u_n)=c_\rho,&\\
I'(u)=\lim\limits_{n\rightarrow\infty}I'(u_n)=0.&
\end{cases}
\end{equation*}
This means that $u$ is a weak solution to problem \eqref{eq1} which is also a local
minimum of the functional $I(u)$. This completes the proof of Theorem \ref{th2.1}.

{\bf Proof of Theorem \ref{th2.2}.}
From Theorem \ref{th2.1} we know that $\mathcal{K}$ is no-empty. Consequently, $c_\mathcal{K}$ is well defined by Lemma \ref{le3.2}.
Take a minimizing sequence $\{u_n\}\subset\mathcal{K}$ for $c_\mathcal{K}$. Then it is obvious that $I'(u_n)=0$.
This, together with $I(u_n)\rightarrow c_\mathcal{K}$ as $n\rightarrow \infty$ and \eqref{log1},  shows that, for $n$ suitably large,
\begin{align*}
c_\mathcal{K}+1+o_n(1)\|u_n\|&\geq I(u_n)-\frac{1}{4}\langle I'(u_n),u_n\rangle\nonumber\\
&=\frac{1}{4}\|u_n\|^{2}-\frac{\lambda}{4}\|u_n\|_{2}^{2} +\frac{\mu}{2}\|u_n\|_{2}^{2}-\frac{\mu}{4}\int_{\Omega}u_n^{2}\ln u_n^{2}\mathrm{d}x\\
&=\frac{1}{4}\|u_n\|^{2}-\frac{\mu}{4}\int_{\Omega}u_n^{2}\left(\frac{\lambda}{\mu}-2+\ln {u_n^2}\right)\mathrm{d}x\\
&=\frac{1}{4}\|u_n\|^{2}-\frac{\mu}{4}e^{2-\frac{\lambda}{\mu}}\int_{\Omega}u_n^{2}e^{\frac{\lambda}{\mu}-2}
\ln\left(u_n^2e^{\frac{\lambda}{\mu}-2}\right)\mathrm{d}x\\
&\geq\frac{1}{4}\|u_n\|^{2}-\frac{\mu}{4}e^{2-\frac{\lambda}{\mu}}\int_{\{x\in\Omega:\ e^{\frac{\lambda}{\mu}-2}u_n^{2}\leq1\}}
u_n^{2}e^{\frac{\lambda}{\mu}-2}\ln \left(u_n^2e^{\frac{\lambda}{\mu}-2}\right)\mathrm{d}x\\
&\geq\frac{1}{4}\|u_n\|^{2}+\frac{\mu}{4}e^{1-\frac{\lambda}{\mu}}|\Omega|,
\end{align*}
which implies that $\{u_n\}$ is bounded in $H_0^1(\Omega)$.
Similar to the proof of Lemma \ref{le3.3}, \eqref{convergence}, \eqref{log-convergence1} and  \eqref{log-convergence2}  remain valid here.
Let $w_n=u_n-u$. Analogously, \eqref{wn}, \eqref{wn1}, \eqref{wn2}, \eqref{Iun-un} and \eqref{inequation5} hold.

When $(b, \lambda,\mu)\in \mathcal{A}_1\cup\mathcal{A}_2\cup\mathcal{A}_3\cup\mathcal{A}_4$,
it is directly seen from \eqref{inequation5} that $l=0$, i.e., $u_n\rightarrow u$ in
$H_0^1(\Omega)$ as $n\rightarrow\infty$.

Now we assume $(b, \lambda,\mu)\in \mathcal{A}_5\cup\mathcal{A}_6$ and $2bS^2-1\geq0$.
Suppose by contradiction that $l>0$. Then from \eqref{inequation5} we have
\begin{align}\label{ineq-1}
l\geq \frac{(1+b\|u\|^2)S^2}{1-bS^2}.
\end{align}
By virtue of \eqref{convergence}, \eqref{log-convergence1}, \eqref{wn1} and \eqref{wn2}, one has, as $n\rightarrow\infty$,
\begin{align*}
I(u_n)=&\frac{1}{2}\|u_n\|^2+\frac{b}{4}\|u_n\|^4-\frac{\lambda}{2}\|u_n\|_2^2+\frac{\mu}{2}\|u_n\|_2^2-\frac{\mu}{2}\int_{\Omega}u_n^2\ln{u_n^2}\mathrm{d}x
-\frac{1}{4}\|u_n\|_4^4\\
=&\frac{1}{2}\|w_n\|^2+\frac{b}{4}\|w_n\|^4+\frac{b}{4}\|w_n\|^2\|u\|^2-\frac{1}{4}\|w_n\|_4^4\\
&+\frac{1}{2}\|u\|^2+\frac{b}{4}\|u\|^4+\frac{b}{4}\|w_n\|^2\|u\|^2-\frac{\lambda}{2}\|u\|_2^2+\frac{\mu}{2}\|u\|_2^2
-\frac{\mu}{2}\int_{\Omega}u^2\ln{u^2}\mathrm{d}x-\frac{1}{4}\|u\|_4^4+o_n(1).
\end{align*}
Set
\begin{align}\label{equa-2}
I_1:=\frac{1}{2}\|w_n\|^2+\frac{b}{4}\|w_n\|^4+\frac{b}{4}\|w_n\|^2\|u\|^2-\frac{1}{4}\|w_n\|_4^4+o_n(1),
\end{align}
and
\begin{align}\label{equa-3}
I_2:=\frac{1}{2}\|u\|^2+\frac{b}{4}\|u\|^4+\frac{b}{4}\|w_n\|^2\|u\|^2-\frac{\lambda}{2}\|u\|_2^2+\frac{\mu}{2}\|u\|_2^2
-\frac{\mu}{2}\int_{\Omega}u^2\ln{u^2}\mathrm{d}x-\frac{1}{4}\|u\|_4^4.
\end{align}
Now we estimate $I_1$ and $I_2$ from below separately. Firstly, it follows from \eqref{Iun-un} that
\begin{align}\label{ineq-2}
I_1=&\frac{1}{4}\langle I'(u_n),u_n\rangle+\frac{1}{4}\|w_n\|^2=\frac{1}{4}\|w_n\|^2+o_n(1),\qquad n\rightarrow\infty.
\end{align}
Letting $n\rightarrow\infty$ in \eqref{ineq-2} and using \eqref{wn} and \eqref{ineq-1}, we have
\begin{align}\label{ineq-3}
I_1\geq\frac{(1+b\|u\|^2)S^2}{4(1-bS^2)}.
\end{align}
As for $I_2$, in view of \eqref{ineq1}, one arrives at
\begin{align}\label{ineq-4}
I_2\geq\frac{1}{2}\|u\|^2+\frac{b}{4}\|u\|^4+\frac{b}{4}\|w_n\|^2\|u\|^2
-\frac{\lambda}{2}\|u\|_2^2+\frac{\mu}{2}\|u\|_2^2
-\frac{\mu}{2}\int_{\Omega}u^2\ln{u^2}\mathrm{d}x-\frac{1}{4S^2}\|u\|^4.
\end{align}
Since $b>0$, letting $n\rightarrow\infty$ in \eqref{ineq-4} and using \eqref{wn} and \eqref{ineq-1} again,
one obtains
\begin{align}\label{ineq-5-1}
I_2\geq\frac{\lambda_1(\Omega)-\lambda}{2\lambda_1(\Omega)}\|u\|^2+\frac{b}{4}\|u\|^4
+\frac{(1+b\|u\|^2)bS^2}{4(1-bS^2)}\|u\|^2
+ \frac{\mu}{2}|\Omega|-\frac{1}{4S^2}\|u\|^4,
\end{align}
when $(b, \lambda,\mu)\in \mathcal{A}_5$ and $2bS^2-1\geq0$; and
\begin{align}\label{ineq-5-2}
I_2\geq\frac{1}{2}\|u\|^2+\frac{b}{4}\|u\|^4+\frac{(1+b\|u\|^2)bS^2}{4(1-bS^2)}\|u\|^2
+ \frac{\mu}{2}e^{-\frac{\lambda}{\mu}}|\Omega|-\frac{1}{4S^2}\|u\|^4,
\end{align}
when $(b, \lambda,\mu)\in \mathcal{A}_6$ and $2bS^2-1\geq0$.

Then it follows from \eqref{ineq-3}, \eqref{ineq-5-1} and \eqref{ineq-5-2} that
\begin{align*}
I(u_n)\geq& \frac{(1+b\|u\|^2)S^2}{4(1-bS^2)}+
\frac{\lambda_1(\Omega)-\lambda}{2\lambda_1(\Omega)}\|u\|^2+\frac{b}{4}\|u\|^4
+\frac{(1+b\|u\|^2)bS^2}{4(1-bS^2)}\|u\|^2+ \frac{\mu}{2}|\Omega|-\frac{1}{4S^2}\|u\|^4\\
=&\frac{S^2}{4(1-bS^2)}+\frac{2bS^2}{4(1-bS^2)}\|u\|^2+\frac{\lambda_1(\Omega)-\lambda}{2\lambda_1(\Omega)}\|u\|^2
+\frac{2bS^2-1}{4(1-bS^2)S^2}\|u\|^4+ \frac{\mu}{2}|\Omega|\\
\geq&\frac{S^2}{4(1-bS^2)}+ \frac{\mu}{2}|\Omega|\\
>&0,
\end{align*}
when $(b, \lambda,\mu)\in \mathcal{A}_5$ and $2bS^2-1\geq0$; and
\begin{align*}
I(u_n)\geq& \frac{(1+b\|u\|^2)S^2}{4(1-bS^2)}+
\frac{1}{2}\|u\|^2+\frac{b}{4}\|u\|^4+\frac{(1+b\|u\|^2)bS^2}{4(1-bS^2)}\|u\|^2+ \frac{\mu}{2}e^{-\frac{\lambda}{\mu}}|\Omega|-\frac{1}{4S^2}\|u\|^4\\
=&\frac{S^2}{4(1-bS^2)}+\frac{2}{4(1-bS^2)}\|u\|^2+\frac{2bS^2-1}{4(1-bS^2)S^2}\|u\|^4+ \frac{\mu}{2}e^{-\frac{\lambda}{\mu}}|\Omega|\\
\geq&\frac{S^2}{4(1-bS^2)}+ \frac{\mu}{2}e^{-\frac{\lambda}{\mu}}|\Omega|\\
>&0,
\end{align*}
when $(b, \lambda,\mu)\in \mathcal{A}_6$ and $2bS^2-1\geq0$.
Since $I(u_n)\rightarrow c_{\mathcal{K}}$ as $n\rightarrow\infty$, one sees from the above two
inequalities that $c_{\mathcal{K}}\geq0$ when $(b, \lambda,\mu)\in \mathcal{A}_5\cup\mathcal{A}_6$
and $2bS^2-1\geq0$, which contradicts $c_{\mathcal{K}}<0$. Hence $l=0$, i.e., $u_n\rightarrow u$
in $H_0^1(\Omega)$ as $n\rightarrow\infty$ when $(b, \lambda,\mu)\in \mathcal{A}_5\cup\mathcal{A}_6$
and $2bS^2-1\geq0$.

In summary, there exist a subsequence of $\{u_n\}$ (which we still denote by $\{u_n\}$) and a $u\in H_0^1(\Omega)$ such that $u_n\rightarrow u$ in $H_0^1(\Omega)$ as $n\rightarrow \infty$. Recalling again that $I(u)$ is $\mathcal{C}^1$ in $H_0^1(\Omega)$, one obtains
\begin{equation*}
\begin{cases}
 I(u)=\lim\limits_{n\rightarrow\infty}I(u_n)=c_\mathcal{K},&\\
 I'(u)=\lim\limits_{n\rightarrow\infty}I'(u_n)=0.&
\end{cases}
\end{equation*}
Therefore, $u$ is a least energy solution to problem \eqref{eq1}.
The proof of Theorem \ref{th2.2} is complete.

\vskip5mm

{\bf Declarations}\\
The authors declare that there are no relevant financial or non-financial competing interests to report.\\

{\bf Data availability}\\
No data was used for the research described in the article.\\

{\bf Acknowledgements}\\
The authors would like to thank Professor Tianhao Liu in Institute of Applied Physics and Computational
Mathematics for some valuable discussions.

\end{document}